\documentclass[11pt,reqno,a4paper]{article}

\usepackage[a-2b]{pdfx}

\usepackage[numbers]{natbib}
\usepackage{epsfig}
\usepackage[title, titletoc]{appendix}
\usepackage[normalem]{ulem}
\usepackage[utf8]{inputenc}
\usepackage{cancel}
\usepackage[margin=1in]{geometry}

\usepackage{amsmath,amsthm,pdfsync,verbatim,graphicx,epstopdf,enumerate,amssymb}

\hypersetup{colorlinks=true,allcolors=blue}

\usepackage{float}
\usepackage[framemethod=tikz]{mdframed}
\usepackage{mathtools}

\usepackage{tikz}
\usepackage{enumitem}

\setlist[enumerate,1]{label=(\alph*)}

\newtheorem{theorem}{Theorem}[section]

\newtheorem{proposition}[theorem]{Proposition}

\theoremstyle{definition}
\newtheorem{definition}[theorem]{Definition}

\numberwithin{equation}{section}

\hypersetup{allcolors=blue}
\allowdisplaybreaks

\newcommand{\Beq}{\begin{equation}}
	\newcommand{\Eeq}{\end{equation}}
\newcommand{\beq}{\begin{equation*}}
	\newcommand{\eeq}{\end{equation*}}
\newcommand{\bal}{\begin{align}}
	\newcommand{\eal}{\end{align}}

\newcommand{\bp}{\begin{prob}}
	\newcommand{\ep}{\end{prob}}
\newcommand{\bpr}{\begin{proof}}
	\newcommand{\epr}{\end{proof}}

\newcommand{\bel}[1]{\begin{equation}\label{#1}}
	\newcommand{\ee}{\end{equation}}

\newcommand{\R}{\mathbb{R}}

\newcommand{\Lc}{\mathcal{L}}

\newcommand{\Zc}{\mathcal{Z}}

\newcommand{\Ec}{\mathcal{E}}
\newcommand{\Fc}{\mathcal{F}}

\newcommand{\Pc}{\mathcal{P}}

\newcommand{\Sb}{\mathbb{S}}

\newcommand{\Zb}{\mathbb{Z}}

\def\pa{{\partial}}
\def\Bbar{{\overline{B}}}
\def\gammadot{{\dot{\gamma}}}
\def\WF{{\text{WF}}}
\def\supp{{\text{supp }}}
\def\la{{\langle}}
\def\ra{{\rangle}}
\def\Bpone{{ \overline{B_p(|p|+1)}}}
\def\cleq{{\, \, \preceq \, \,}}
\def\Stwo{\mathbb{S}^2}
\def\ep{{\epsilon}}

\def\vspc{{\vspace{-0.2in}}}

\begin{document}

\title{Rigidity for formally determined point source scattering for an anisotropic acoustic medium}

%\date{September 4, 2026}

\author{Pradipta Chatterjee\thanks{\begin{tabular}[t]{@{}l@{}}
Centre for Applicable Mathematics, Tata Institute of Fundamental Research, Bangalore, India.\\
Email: pradipta22@tifrbng.res.in
\end{tabular}}, 
Rakesh\thanks{\begin{tabular}[t]{@{}l@{}}Department of Mathematical Sciences, University of Delaware, Newark DE, USA.
\\
Email: rakesh@udel.edu
\end{tabular}}, 
Abhilash Tushir\thanks{\begin{tabular}[t]{@{}l@{}}
Centre for Applicable Mathematics, Tata Institute of Fundamental Research, Bangalore, India.\\
Email: abhilash2296@gmail.com
\end{tabular}}
}
\date{}
\maketitle

\begingroup
\renewcommand{\thefootnote}{}
\footnotetext{\begin{tabular}[t]{@{}l@{}}\textbf{Keywords:} Inverse scattering; Fixed angle scattering; Wave equation; Riemannian metric; Eikonal equation.\\
\textbf{2020 Mathematics Subject Classification:} 35R30\end{tabular}}
\endgroup
\begin{abstract}
Consider an acoustic medium, occupying three dimensional space,
whose anisotropic sound speed is modeled by a Riemannian metric which is Euclidean outside a compact region $D$.
The medium is probed by acoustic waves generated by a finite number (dimension dependent) of point sources
outside $D$ and the medium response is measured on the boundary of $D$ for a finite time period. We show
that if these measurements agree with that of an Euclidean metric then the metric is Euclidean up to a diffeomorphism.
This result is similar to the result in \cite{ORS26a} where they used plane waves instead of point 
sources. Using point sources instead of plane waves presents new difficulties, dealt with in this article. 
\end{abstract}
\tableofcontents
\section{Introduction}

Our problem is motivated by the desire to recover the acoustic properties of a medium of dimension two or higher, 
which is inhomogeneous only on a bounded set, from the medium response, measured in the homogeneous part, to sources located in the homogeneous part. So the sources and the measurements are both in the homogeneous part. Such problems have received considerable attention over the last 40 years but most of the theoretical work on such problems has been for overdetermined problems, typically involving the use of an infinite number of sources, 
where the parameter count of the data exceeds the dimension of the space.  Our goal is to work 
on formally determined problems where the number of sources is a dimension dependent finite number, with the number 
of sources being one or two if the acoustic property is modeled by a single function.

The most significant first results and methods for formally determined inverse problems for hyperbolic PDEs, 
in dimensions two or higher, were \cite{Bukhgeum81} and \cite{IY2001}. However, these groundbreaking results and 
methods, for bounded domains, required the source to be in the inhomogeneous part of the medium and be positive 
at every point in the medium. These results are not suitable for applications where the source is in the homogeneous 
part of the medium or on the boundary of the medium (if the medium is a bounded domain). 
These methods were adapted to plane wave sources coming from infinity or point sources in the homogeneous part of 
the medium in \cite{Salo20A}, \cite{Salo20}, \cite{Salo21}, \cite{Salo22}, \cite{Salo23},
and \cite{KRS21}, \cite{KRS23} when the coefficients depend on space and time.
However, these results pertained to the determination of the zeroth order or the first order coefficients of a second order hyperbolic PDE, assuming the medium velocity (the highest order coefficient) was known. 

The determination of the medium velocity using plane wave or point sources presents a much stiffer challenge. 
In \cite{ORS26a}, it was shown that a medium with anisotropic velocity 
(modeled by a Riemannian metric\footnote{There is no requirement that this metric be close to Euclidean.}) could be distinguished from the constant velocity (modeled by the Euclidean metric) medium, 
using the data corresponding to a finite number 
(dimension dependent) plane wave sources. In \cite{ORS26b}, 
this result was extended to time dependent anisotropic mediums 
(modeled by a Lorentzian metric), again using plane wave sources, where a general Lorentzian metric was compared
with the Minkowski metric. Our result is similar to the result in \cite{ORS26a} except we use point sources instead of 
plane wave sources. Later in this introduction we describe what new difficulties arise when plane wave sources are 
replaced by point sources.

The recently posted \cite{ORS26c} generalizes substantially the work in \cite{ORS26a} by comparing two general 
Riemannian metrics (with some constraints on one of them), again using plane waves. The recently posted
\cite{ORS26d} generalizes substantially the work in \cite{ORS26b} by comparing two general Lorentzian metrics (with 
some constraints on one of them), again using plane waves. We also recommend reading the surprising results in 
\cite{AK25} for problems similar to ours, where the goal is to recover not just the Riemannian metric but the unknown 
source as well (with some conditions on the source) from partial spectral data.

We do not give a broad review of the literature for the problems being considered in this article as the
recently published \cite{ORS26a}, \cite{ORS26b} do a thorough job of it, updated in 
 \cite{ORS26c}, \cite{ORS26d}.

Let $B\subseteq \mathbb{R}^3$ denote the origin centered open unit ball. We say the differential operator
\[
\Lc_{A,b} := \pa_t^2 - \sum_{i,j=1}^3 a_{ij}(x) \pa_i \pa_j - \sum_{i=1}^3 b_i(x) \pa_i 
\]
is {\em admissible} if $a_{ij}(x), b_i(x)$ are smooth real valued functions on $\R^3$ with $A(x) = (a_{ij}(x))$  a positive 
definite symmetric matrix and $A(x) = I$, $b(x)=0$ for $x$ outside $B$.
A special case of $\Lc_{A,b}$ is when $g(x) = (g_{ij}(x))$ is a Riemannian metric on $\R^3$ with $g=g_{Eucl}$ outside $B$; we define
\[
\Box_g := \pa_t^2 - \Delta_g,
\]
where
\[
\Delta_g := |g|^{-1/2} \pa_i \left ( |g|^{1/2} g^{ij} \pa_j \right ) 
= g^{ij} \pa_i \pa_j + |g|^{-1/2} \pa_i \left ( |g|^{1/2} g^{ij} \right ) \pa_j.
\]
with $|g| = \det ( g_{ij})$ and $(g^{ij})$ being the inverse of $(g_{ij})$. 

We will also say $A$ or $g$ is {\em admissible} if $A(x)$ is smooth and positive definite on $\R^3$, $g(x)$ is a Riemannian metric on $\R^3$, and $A(x)-I$, $g(x)-I$ are supported in $B$.

Let $p$ be a point outside $\Bbar$ and let $U_p(x,t)$ be the unique distributional solution (see Proposition 
\ref{prop:forward}) of the IVP
\begin{subequations}
\begin{align}
\Lc U_p =4 \pi H(t) \, \delta(x-p), & \qquad \text{on } \R^3 \times \R
\label{eq:Upde}
\\
U_p = 0, & \qquad \text{on } \R^3 \times (-\infty, 0).
\label{eq:Upic}
\end{align}
\end{subequations}
From Proposition \ref{prop:forward}
we know that WF$(U_p)$ does not intersect the normal bundle of $\pa B \times \R$ hence, from 
\cite[Theorem 8.2.4]{H1}, $U_p$ has a trace on $\pa B \times \R$. Our long term goal is to prove the injectivity of the map
\[
\Fc_p : (A,b) \to U_p|_{\pa B \times (-\infty, T)},
\]
for some large $T>0$.  One should not be able to recover multiple functions $a_{ij}, b_i$ from the data generated by 
a single source. So a reasonable problem would be to study the injectivity of
\[
\Fc : (A,b) \to \left [ \Fc_p (A,b) \right ]_{p \in \Pc},
\]
where $\Pc$ is some finite set of points outside $\Bbar$.

On occasion, we emphasize the dependence of $U_p$ on $A,b$. On these occasions, instead
of $U_p$, we write $U_{p,A,b}$ for a general $\Lc_{A,b}$, write $U_{p,g}$ when $\Lc = \Box_g$, write
 $U_{p,A}$ when $b=0$, or write $U_{p,\rho}$ when $A = \rho I$ for some function $\rho$ and $b=0$.

We focus on three four problems. Fix a $T>0$.
\vspc
\begin{enumerate}
\item \textbf{The $\rho$-problem: } Fix a $p$ outside $\Bbar$, take $b=0$ and $A=\rho I$ for some function $\rho$. Prove the injectivity of the map 
\[
\Fc: \rho \to  U_{p,\rho}|_{\pa B \times (-\infty, T)}.
\]
\item \textbf{The $g$-problem:} Fix $\Pc$ a finite set of points outside $B$ and take $\Lc = \Box_g$ for a Riemannian metric $g$. Prove the injectivity of the map
\[
\Fc : g \to [\Fc_p(g)]_{p \in \Pc}, \qquad \text{where } \Fc_p : g \to U_{p,g}|_{\pa B \times (-\infty, T)}.
\]
One can see that if $\Psi: \R^3 \to \R^3$ is a diffeomorphism with $\Psi(x)=x$ outside $B$, then $U_{p,g}$ and $U_{p,\Psi^*(g)}$ have the same trace on $\pa B \times (-\infty, T)$. Hence the best we can hope to do is recover $g$ up to a diffeomorphism which is the identity outside $B$. 
\item \textbf{The $A$-problem:} Fix $\Pc$ a finite set of points outside $B$, take $b=0$. Prove the injectivity of the map
\[
\Fc : A \to [\Fc_p(A)]_{p \in \Pc}, \qquad \text{where } \Fc_p : A \to U_{p,A}|_{\pa B \times (-\infty, T)}.
\]
Compare this with the $g$ problem, where there is an ambiguity in the recovery of $g$.
\end{enumerate}

One may verify that these three  problems are formally determined problems. A first step towards our goal
is comparing the data for an inhomogeneous medium $\Lc_{A,b}$ with that 
for the homogeneous medium $\Lc = \Box$ - the standard wave operator. We state our results for the three problems.
Here $e_{i}$ represents the point on the positive $x_{i}$-axis, one unit from the origin and, for {\em admissible}
$\rho, A, g$ we define
\begin{gather*}
\rho_{max} := \sup_{x \in \R^3} \rho^{-1}(x),
\qquad
A_{max} := \sup_{x, v \in \R^3, ~ |v|=1}  v^T A(x)^{-1} v,
\\
g_{min} := \inf_{x, v \in \R^3, ~ |v|=1} v^T g(x) v,
\qquad
g_{max} := \sup_{x, v \in \R^3, ~ |v|=1} v^T g(x) v.
\end{gather*}

\begin{theorem}[The $\rho$ problem]\label{Roproblem} 
Suppose $\Lc_{A,b}$ is admissible with $A=\rho I$, $b=0$,
$p \in \R^3 \setminus \Bbar$, and $T>\sqrt{\rho_{\max}}(|p|+3)$. If
\[
U_{p,\rho} = U_{p,1}, \qquad \text{on } \pa B \times (-\infty, T),
\]
then $\rho=1$.
\end{theorem}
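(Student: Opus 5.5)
We will follow the Bukhgeim--Klibanov / Imanuvilov--Yamamoto strategy in the form adapted to point sources. The operator here is $\Lc = \pa_t^2 - \rho\Delta$, and the background operator is $\Box$.

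\textbf{Step 1: the difference solves a wave equation outside the light cone.} Set $u = U_{p,\rho} - U_{p,1}$. Outside $B$ both operators are $\Box$, so $u$ solves $\Box u = 0$ in $(\R^3\setminus\Bbar)\times\R$ with $u=0$ for $t<0$. By hypothesis $u=0$ on $\pa B\times(-\infty,T)$. Uniqueness for the exterior problem with zero Dirichlet data, combined with finite speed of propagation, gives $u = 0$ on $(\R^3\setminus B)\times(-\infty,T)$. In particular the Neumann traces also agree on $\pa B\times(-\infty,T)$. Inside $B$ we have
\[
(\pa_t^2 - \rho\Delta) u = (\rho-1)\Delta U_{p,1}, \qquad \Delta U_{p,1} = \pa_t^2 U_{p,1}.
\]
Since $U_{p,1} = H(t-|x-p|)/|x-p|$, the source term is $(1-\rho^{-1})$ times a singular term carried by the cone $t=|x-p|$.

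\textbf{Step 2: progressing wave expansion.} Using Proposition \ref{prop:forward}, write
\[
U_{p,\rho} = \alpha(x)\,H(t-\tau(x)) + w(x,t)\,H(t-\tau(x)),
\]
where $\tau$ is the travel time from $p$ in the metric $\rho^{-1}|dx|^2$. It satisfies the eikonal equation $\rho|\nabla\tau|^2 = 1$, and the amplitude $\alpha > 0$ satisfies a transport equation. The jump of $U_{p,\rho}$ across its wavefront must match that of $U_{p,1}$ on $\pa B$, so $\tau = |x-p|$ on $\pa B$ for all boundary points reached before time $T$. The condition $T > \sqrt{\rho_{\max}}(|p|+3)$ is exactly what guarantees that every point of $\Bbar$ is reached by the front, with enough time left over to carry out the Carleman argument.

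\textbf{Step 3: a Carleman estimate and the main obstacle.} The main obstacle is that we do not yet know $\tau = |x-p|$ inside $B$, so the two wave fronts may differ there and $u$ need not be regular. We follow \cite{ORS26a}, adapted to spherical fronts, and work in the region $t \ge \tau(x)$. We take the unknowns to be $\rho$ (equivalently $\tau$) together with the function $w$ behind the front.

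Set $\rho^{-1} = |\nabla\tau|^2$. On the front, differentiating in $t$ and evaluating the transport equation produces a coupled system. It consists of:
\begin{itemize}
\item the eikonal relation, linearized around $|x-p|$ as
\[
|\nabla\tau|^2 - |\nabla|x-p||^2 = \nabla(\tau-|x-p|)\cdot\nabla(\tau+|x-p|),
\]
which is a first order transport equation along rays for $f = \tau - |x-p|$;
\item a hyperbolic equation for the difference of the $w$'s, posed in a characteristic wedge, with a source that is linear in $\rho-1$, equivalently in derivatives of $f$.
\end{itemize}

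We then apply a Carleman estimate with the weight $e^{\sigma\phi}$, where
\[
\phi = (t-\tau)\ \text{or}\ \phi = |x-p|^2 - \beta t^2 .
\]
This weight must be pseudoconvex for $\pa_t^2 - \rho\Delta$ in the region between the front and $t=T$. Here we use $p\notin\Bbar$ and the time bound: pseudoconvexity of $|x-p|^2$ requires a bound on $\nabla\rho$, or on $\rho_{\max}$, relative to the distance scales $|p|\pm1$. Checking it is the key computation.

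The zero Cauchy data on $\pa B\times(-\infty,T)$ and on the initial front eliminate the boundary terms. Because the source terms involve $f$ and $\nabla f$ only through a transport (Volterra-type) relation, they can be absorbed for large $\sigma$. This yields $f = 0$ and $w$-difference $= 0$, hence $\tau = |x-p|$ in $B$, and therefore $\rho = |\nabla\tau|^{-2} = 1$.

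If pseudoconvexity fails globally, the fallback is a layer-stripping argument. We would first show that $\rho = 1$ near $\pa B$ on the side facing $p$, then march inward along spheres centered at $p$, using unique continuation across the non-characteristic surfaces $|x-p| = r$.
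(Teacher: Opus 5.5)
Your proposal has a genuine gap: Step 3 carries the whole proof, and it does not go through under the stated hypotheses. The theorem assumes nothing about $\rho$ beyond smoothness, positivity and $\rho=1$ outside $B$. So pseudoconvexity of $t-\tau$ or $|x-p|^2-\beta t^2$ for $\pa_t^2-\rho\Delta$ cannot be verified; you concede it needs a bound on $\nabla\rho$ that is not available, and Bukhgeim--Klibanov type arguments for the principal coefficient rely on exactly such a hypothesis. There are further problems. The source $(\rho-1)\Delta U_{p,1}=(\rho-1)|x-p|^{-1}\delta'(t-|x-p|)$ is too singular for a standard Carleman estimate. The unknown also enters nonlinearly through the front $\tau$. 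The claim that all of this is absorbed through a ``Volterra-type'' relation is asserted rather than shown, and the layer-stripping fallback is not an argument.

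Step 2 also has a hole. Writing $U_{p,\rho}=(\alpha+w)H(t-\tau)$ with $\tau$ smooth presupposes that the front has no caustics in $\Bbar$, and this does not follow from Proposition \ref{prop:forward}. The paper derives it from the data. The equality $U_{p,\rho}=U_{p,1}$ outside $B$ forces the wave front sets to agree there, so the $p$-geodesics are straight outside $B$. Hence the EIC holds, and Proposition \ref{prop:EIC} makes the travel time $\tau$ (the paper's $\alpha_p$) smooth on $B_p(T)\setminus\{p\}$ with $\tau=|x-p|$ outside $B$.

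The missing idea is that, because you compare with the homogeneous medium, a rigidity argument replaces any Carleman estimate. Write $U_{p,\rho}=u\,H(t-\tau)$ with $u$ smooth behind the front. Then $u_t$ solves $(\pa_t^2-\rho\Delta)u_t=0$ there and vanishes outside $B$. Unique continuation for time-independent coefficients (this is what the extra $2\sqrt{\rho_{\max}}$ in $T$ pays for), followed by the characteristic energy estimate of Proposition \ref{prop:energy}, gives $u_t\equiv 0$. So $U_{p,\rho}=v(x)H(t-\tau)$, with no tail behind the front. Now $(\pa_t^2-\rho\Delta)U_{p,\rho}=0$ away from $p$ splits into the transport equation and $\rho\Delta v=0$, which together give $\rho\Delta(\tau v)=0$. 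Since $\tau v=1$ outside $B$, elliptic unique continuation yields $v=\tau^{-1}$, hence $\Delta(\tau^{-1})=0$ on $\R^3\setminus\{p\}$. As $\tau^{-1}=|x-p|^{-1}$ outside $B$, unique continuation for harmonic functions gives $\tau=|x-p|$ everywhere. The eikonal equation $\rho|\nabla\tau|^2=1$ then forces $\rho=1$.
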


 \begin{theorem}[The $g$ problem]\label{gproblem} 
 Suppose $\Lc = \Box_g$ is admissible, $T>\sqrt{g_{\max}}(2 \sqrt{2}+3)$, and let
 \[
\Pc := \{ 2e_i\}_{i=1}^3 \cup \{ 2(e_j + e_k) : j < k \}_{j,k=1}^3
\cup \{ e_1 + 2e_2, e_2 + 2e_3, e_3 + 2e_1, e_1+e_2+e_3 \}.
\]
If
 \[
U_{p,g} = U_{p,Eucl}, \qquad \text{on } \pa B \times (-\infty, T), \qquad \forall p \in \Pc,
\]
then there is a diffeomorphism $\psi: \R^3 \to \R^3$ with $\Psi=$id outside $B$ such that $\Psi^*(Eucl)= g$.
\end{theorem}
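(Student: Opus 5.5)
We follow the strategy of \cite{ORS26a}, adapted from plane waves to point sources. The proof has two stages. First, show that equality of the traces forces, for each $p\in\Pc$, equality of the full solutions $U_{p,g}=U_{p,Eucl}$ outside $B$, and then equality of the progressing-wave (Hadamard) structure. In particular the travel-time function $\tau_p(x)=d_g(p,x)$ agrees with $|x-p|$ near $\pa B$, together with its normal derivative. Second, use the $13$ distance functions $\{\tau_p\}_{p\in\Pc}$ to build the diffeomorphism $\Psi$.

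\textbf{Step 1 (exterior uniqueness and leading singularity).} Set $w=U_{p,g}-U_{p,Eucl}$. Outside $B$ it solves $\Box w=0$, vanishes for $t<0$, and has zero trace on $\pa B\times(-\infty,T)$. Solving the exterior problem (or using unique continuation/domain of dependence) gives $w=0$ in $(\R^3\setminus B)\times(-\infty,T')$ for a suitable $T'$. Hence the Cauchy data of $U_{p,g}$ on $\pa B$ match the Euclidean ones. Next use the progressing-wave expansion from Proposition \ref{prop:forward}:
\[
U_{p,g}=\alpha_p(x)H(t-\tau_p(x))+(\text{smoother terms}),
\]
valid where $\tau_p$ is smooth. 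In general we work instead with the first-arrival set $t=\tau_p(x)$, the boundary of $\supp U_{p,g}$. Matching supports and wavefront sets on $\pa B$ yields
\[
\tau_p=|x-p| \quad\text{on }\pa B,\qquad \pa_\nu\tau_p=\pa_\nu|x-p| \quad\text{on }\pa B.
\]
The hypothesis $T>\sqrt{g_{\max}}(2\sqrt2+3)$ ensures that every first-arrival ray from $p$ (with $|p|\le 2\sqrt2$) crossing $B$ exits within time $T$. This lets us conclude that the boundary travel times from each $p$ coincide with Euclidean ones.

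\textbf{Step 2 (eikonal identity and the map).} This is the main obstacle, and the new difficulty relative to plane waves. Unlike the linear phases $x\cdot\omega$, the function $\tau_p$ is only Lipschitz, and it may fail to be smooth inside $B$ because of caustics and cut points. We would first show that the data force the absence of conjugate points: the amplitude $\alpha_p$ obeys a transport equation, and combined with a Santaló-type energy or volume identity (comparing $\int_B$ of $|\nabla_g\tau_p|_g^2$ and the Euclidean counterpart) it forces equality and smoothness. The key identity would come from integrating the divergence of $\alpha_p^2\nabla_g\tau_p$ over $B$, using the boundary agreement. Together with Cauchy–Schwarz and the eikonal equation $|\nabla_g\tau_p|_g=1$, this should give $\operatorname{vol}_g(B)\ge\operatorname{vol}(B)$ along with the reverse inequality.

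\textbf{Step 3 (construction of $\Psi$).} Given smooth $\tau_p$, define
\[
F_p(x)=\tfrac12\tau_p(x)^2,
\]
which agrees with $\tfrac12|x-p|^2$ outside $B$. The differences $F_p-F_q$ are Euclidean-affine outside $B$. From the three sources $2e_i$ and suitable combinations with the other points, chosen so that the $\tfrac12|x|^2$ terms cancel, form functions $\Psi_i$ with $\Psi_i(x)=x_i$ outside $B$. The choice of $\Pc$ (the $2e_i$, $2(e_j+e_k)$, $e_i+2e_{i+1}$, $e_1+e_2+e_3$) makes available enough independent quadratic combinations to express each coordinate and each product $x_jx_k$. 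One then shows $\Psi=(\Psi_1,\Psi_2,\Psi_3)$ satisfies
\[
g(\nabla_g\Psi_i,\nabla_g\Psi_j)=\delta_{ij}.
\]
This follows from the eikonal equations for all $\tau_p$, polarization among the listed points, and the Step 2 rigidity that forces Hessian identities such as $\nabla_g^2F_p=g$. These relations give $\Psi^*(Eucl)=g$. Finally, $\Psi$ is a local isometry equal to the identity outside $B$, hence proper and of degree one, so it is a diffeomorphism.
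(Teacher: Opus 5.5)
\textbf{There is a genuine gap, and it is exactly the point-source difficulty.}

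Your Step 3 needs a way to carry identities that hold outside $B$ (where $\tau_p=|x-p|$) into $B$. The only inputs you name are the eikonal equations and a Hessian identity $\nabla_g^2F_p=g$. Neither does the job.
\begin{itemize}
\item The eikonal equation is first order, and by itself it gives no way to transfer identities from $\R^3\setminus B$ into $B$. Knowing $\|\nabla_g\Psi_i\|=1$ or $\langle\nabla_g\Psi_i,\nabla_g\Psi_j\rangle=0$ outside $B$ says nothing inside.
\item $\nabla_g^2F_p=g$ on $\R^3$, even for a single $p$, already forces $g$ to be flat. In geodesic polar coordinates about $p$ it makes $g=dr^2+r^2h$ with $h$ independent of $r$, and $h$ is round because $g$ is Euclidean near $p$. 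So this identity is the theorem itself, and nothing in your Steps 1--2 produces it.
\end{itemize}

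The missing idea is a \emph{second-order elliptic} equation for $\tau_p$, extracted from the amplitude (Proposition \ref{prop:UU0}):
\begin{itemize}
\item Write $U_{p,g}=u\,H(t-\tau_p)$. Then $u_t$ solves $\Box_g u_t=0$ and vanishes outside $B$.
\item Unique continuation from the exterior, plus an energy estimate on the region above the characteristic surface $t=\tau_p(x)$, gives $u_t=0$. So $U_{p,g}=v(x)H(t-\tau_p(x))$ exactly, with no tail.
\item The transport equation together with $\Delta_g v=0$ gives $\Delta_g(\tau_p v)=0$. Since $\tau_p v=1$ outside $B$, unique continuation forces $v=\tau_p^{-1}$ everywhere.
\item Hence $\Delta_g\tau_p^{-1}=0$, and with the eikonal equation this gives $\Delta_g\tau_p^2=6$.
\end{itemize}

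Only with this equation does your polarization idea work:
\begin{itemize}
\item For $c-d=p-q$, the function $\tau_c^2-\tau_d^2-\tau_p^2+\tau_q^2$ is $\Delta_g$-harmonic and constant outside $B$, hence constant on $\R^3$.
\item Combined with the midpoint structure of $\Pc$ (e.g.\ $e_1+2e_2$ is the midpoint of $2e_2$ and $2(e_1+e_2)$), this yields $\Delta_g\langle\nabla_g\tau_p^2,\nabla_g\tau_q^2\rangle=24$.
\item Therefore $\Delta_g$ annihilates the squared gradient norms of the six combinations $\frac14(\tau_{2e_i}^2-\tau_{2(e_j+e_k)}^2)$ with $e_i+e_j+e_k\in\Pc$. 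By polarization it also annihilates $\langle\nabla_g\Psi_i,\nabla_g\Psi_j\rangle$.
\item Elliptic unique continuation then upgrades $\langle\nabla_g\Psi_i,\nabla_g\Psi_j\rangle=\delta_{ij}$ from $\R^3\setminus B$ to all of $\R^3$.
\end{itemize}

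\textbf{Your Step 2 also does not work as stated.} The divergence identity for $\alpha_p^2\nabla_g\tau_p$ (with $\alpha_p$ your amplitude) presupposes that $\tau_p$ is smooth on $B$, which is what you are trying to prove. It is also a flux identity, not a volume comparison, and even $\operatorname{vol}_g(B)=\operatorname{vol}(B)$ would not exclude caustics.

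In the paper, smoothness is obtained geometrically:
\begin{itemize}
\item Equality of the solutions on $(\R^3\setminus B)\times(-\infty,T)$ gives equality of wavefront sets there. So before time $T$, every $p$-geodesic runs along a straight half-line from $p$ once it is outside $B$.
\item Distinct such half-lines never meet, so the Exterior Injectivity Condition holds.
\item Proposition \ref{prop:EIC} then shows $\Phi_p$ is a diffeomorphism onto $B_p^*(T)\supset\Bbar$, so $\tau_p$ is smooth there. Its ingredients are a homology argument on spherical caps showing every $p$-geodesic leaves $\overline{B_p(|p|+1)}$ before time $T$, a length-minimization argument for injectivity, and the fact that a conjugate point would give a shorter path to an exterior point.
\end{itemize}
Your Step 1 extracts only travel times and normal derivatives on $\pa B$. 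That is not the information this argument needs; it needs the straight-line exit structure of all $p$-geodesics.
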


\begin{theorem}[The $A$ problem]\label{Aproblem} 
Suppose $\Lc_{A,b}$ is admissible with $b=0$, $T>\sqrt{A_{\max}}(2 \sqrt{2}+3)$
and let
\[
\Pc := \{ 2e_i\}_{i=1}^3 \cup \{ 2(e_j + e_k) : j < k \}_{j,k=1}^3
\cup \{ e_1 + 2e_2, e_2 + 2e_3, e_3 + 2e_1, e_1+e_2+e_3 \}.
\]
If
\[
U_{p,A} = U_{p,I}, \qquad \text{on } \pa B \times (-\infty, T), \qquad \forall p \in \Pc,
\]
then $A=I$.
\end{theorem}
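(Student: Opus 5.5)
Theorem \ref{Aproblem} is the statement to prove. I would reduce it to Theorem \ref{gproblem} via the standard correspondence between the principal part $A$ and a Riemannian metric, and then remove the diffeomorphism ambiguity using the absence of first order terms. Define the metric $g$ by $g^{ij} = a_{ij}$, that is $g = A^{-1}$. Then $g$ is admissible, $g_{\max} = A_{\max}$, and
\[
\Box_g = \pa_t^2 - a_{ij}\pa_i\pa_j - |g|^{-1/2}\pa_i\big(|g|^{1/2} a_{ij}\big)\pa_j ,
\]
so $\Lc_{A,0}$ and $\Box_g$ differ only by a first order term. The two operators therefore share the same characteristics and hence the same travel times and the same singularities of $U_p$. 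The hypothesis $U_{p,A}=U_{p,I}$ on $\pa B\times(-\infty,T)$ is not literally the hypothesis of Theorem \ref{gproblem}, because the lower order term changes the amplitudes. So I would not invoke Theorem \ref{gproblem} as a black box. Instead I would rerun its proof and isolate which information it actually uses from the data.

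\textbf{Step 1 (travel times and the diffeomorphism).} From the progressing wave expansion of Proposition \ref{prop:forward}, $U_{p,A}$ has leading singularity of the form $\alpha_p(x)\,H(t-\tau_p(x))$ (or a delta-type term, depending on normalization). Here $\tau_p$ solves the eikonal equation $a_{ij}\pa_i\tau\pa_j\tau=1$ with $\tau_p(x)\approx|x-p|$ near $p$. Equality of the data on $\pa B$ up to time $T>\sqrt{A_{\max}}(2\sqrt2+3)$ gives $\tau_p=|x-p|$ and equality of all amplitude coefficients on $\pa B$ for each $p\in\Pc$. The geometric part of the proof of Theorem \ref{gproblem} only uses the $\tau_p$ together with the transport equations for the amplitudes. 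For $\Lc_{A,0}$ the first transport equation is
\[
2a_{ij}\pa_i\tau\,\pa_j\alpha + (a_{ij}\pa_i\pa_j\tau)\alpha=0 .
\]
Its divergence-form version differs from the $\Box_g$ version by the extra term $b^g\cdot\nabla\tau$, where $b^g_j=|g|^{-1/2}\pa_i(|g|^{1/2}a_{ij})$. I would redo the argument, presumably energy or Carleman-type integral identities on $B$ combining the eikonal and transport equations for the $13$ sources, with $a_{ij}\pa_i\pa_j$ in place of $\Delta_g$. The output I aim for is: the $13$ functions $\tau_p$, and hence the map $\Psi$ built from them, satisfy $\Psi^*(Eucl)=g$ with $\Psi=\mathrm{id}$ outside $B$.

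\textbf{Step 2 (killing the diffeomorphism).} Once $g=\Psi^*(Eucl)$, the non-divergence operator $a_{ij}\pa_i\pa_j$ must also be pulled back to the flat Laplacian, and this forces $\Psi$ to be affine. Concretely, set $y=\Psi(x)$. Then $\Delta_g$ corresponds to $\Delta_y$, and $\Delta_g (y_k)=0$ for each coordinate function. The amplitude and higher transport data for $\Lc_{A,0}$ and $\Box$ agree on $\pa B$. Combined with the transport equations this shows that the first order discrepancy $b^g_j\pa_j$ annihilates the $\tau_p$. Since the gradients $\nabla\tau_p$ for the sources in $\Pc$ span $\R^3$ at each point, $b^g=0$. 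Hence $a_{ij}\pa_i\pa_j y_k=\Delta_g y_k=0$, so each $y_k$ is $a$-harmonic in the non-divergence sense. In addition $y_k-x_k$ vanishes outside $B$, and $\nabla y_k$ satisfies the algebraic relation $a_{ij}\pa_i y_k\pa_j y_l=\delta_{kl}$. Applying the operator to $y_ky_l$ gives $a_{ij}\pa_i\pa_j(y_ky_l)=2\delta_{kl}$. A second derivative argument, differentiating the relation $D\Psi\,A\,D\Psi^T=I$ and using harmonicity, yields $D^2\Psi=0$. Since $\Psi=\mathrm{id}$ near $\pa B$, it follows that $\Psi=\mathrm{id}$ and $A=I$.

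\textbf{Main obstacle.} The hardest point is Step 1's claim that the proof of Theorem \ref{gproblem} tolerates the extra first order term. In particular, the conclusion $b^g=0$ needs the amplitude data, not just the travel times. What I would try first is to write $\Lc_{A,0}=\Box_g + \beta\cdot\nabla$ with $\beta=-b^g$. I would then carry $\beta$ through the transport identities and show it contributes a term $\beta\cdot\nabla\tau_p$ that must vanish for all $p\in\Pc$ by the boundary equality of amplitudes. Integrating along the rays from $p$, which are straight lines because $\tau_p=|x-p|$, converts this into a vanishing statement for $\beta\cdot\nabla\tau_p$ on each ray.
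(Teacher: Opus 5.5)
\textbf{Verdict: your architecture is right but there are two genuine gaps — Step 1 is missing its key mechanism, and the final step of Step 2 would fail as written.}

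\textbf{Step 1.} Your architecture matches the paper's: build $\Psi$ for the non-divergence operator, obtain $g=\Psi^*(\mathrm{Eucl})$, then use $b=0$ to force $\Psi=\mathrm{id}$. In fact Propositions \ref{prop:UU0}, \ref{prop:alphaprop}, \ref{prop:Vprop} are stated for any admissible $\Lc_{A,b}$, so no comparison with $\Box_g$ is needed.

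However, Step 1 carries essentially all the content and is only announced. The mechanism you guess would not deliver it. Matching amplitudes on $\pa B$ together with the eikonal and first transport equations gives at most integral constraints along rays. Also, the step combining the $13$ sources is not an integral identity; it is polarization plus elliptic unique continuation.

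The missing idea is as follows.
\begin{itemize}
\item The data give $U_{p,A}=U_{p,I}$ on $(\R^3\setminus B)\times(-\infty,T)$.
\item Unique continuation for $u_t$ across $\pa B$, followed by the energy estimate of Proposition \ref{prop:energy} (whose lower boundary is the characteristic surface $t=\alpha_p(x)$), shows $U_{p,A}=v(x)H(t-\alpha_p(x))$ with $v$ independent of $t$ throughout $B$.
\item Hence both $2a_{ij}\pa_j\alpha_p\,\pa_i v+(\Ec_A\alpha_p)v=0$ and $\Ec_A v=0$ hold. So $\Ec_A(\alpha_p v)=0$ with $\alpha_p v=1$ outside $B$, and unique continuation gives $v=\alpha_p^{-1}$, i.e.\ $\Ec_A\alpha_p^{-1}=0$.
\item Combined with $\|\nabla_g\alpha_p\|=1$, this yields the identity $\Ec_A(\alpha_p^2)=6$, which drives everything.
\end{itemize}
From $\Ec_A(\alpha_p^2)=6$: differences of the $\alpha_p^2$ are $\Ec_A$-harmonic; polarization over the configurations in $\Pc$ gives $\Ec_A\langle\nabla\alpha_p^2,\nabla\alpha_q^2\rangle=24$; and unique continuation then makes the $g$-gradients of the components of $\Psi$ orthonormal.

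Note also that $\alpha_p=|x-p|$ holds only outside $B$. Inside $B$ the rays are $g$-geodesics, not straight lines, so your plan of integrating along straight rays rests on a false premise.

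\textbf{Step 2.} The detour through $b^g=0$ can be justified once $\Ec_A\alpha_p^{-1}=0$ is known: compare $a_{ij}\pa_i\pa_j\alpha_p=2/\alpha_p$ with $\Delta_g\alpha_p=2/\alpha_p$ (valid because $\alpha_p=|\Psi(x)-p|$), and use that $\Pc$ is not coplanar. But it is unnecessary, since $\Ec_A(\alpha_p^2)=6$ directly gives $\Ec_A V^i_{j,k}=0$.

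Your final step fails as stated. No argument that differentiates $D\Psi\,A\,D\Psi^T=I$ and uses $a_{ij}\pa_i\pa_j\Psi_k=0$ can yield $D^2\Psi=0$, because these are local identities. A local counterexample:
\begin{itemize}
\item Take $\Phi(y)=(y_1+\epsilon h(y),y_2,y_3)$ with $h$ harmonic and non-affine, and let $\Psi=\Phi^{-1}$ near a point, for small $\epsilon$.
\item Set $A:=(D\Psi^TD\Psi)^{-1}$, so $D\Psi\,A\,D\Psi^T=I$ automatically.
\item Then $\Delta_g x_j=(\Delta\Phi_j)\circ\Psi=0$, so $b^g=0$, and $a_{ij}\pa_i\pa_j\Psi_k=\Delta_g\Psi_k=0$.
\item Yet $D^2\Psi\neq0$.
\end{itemize}

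The conclusion must use global information, exactly as the paper does. Since $b=0$, $\Ec_A$ annihilates linear functions. So $\Psi_k-x_k$ solves $\Ec_A w=0$ on $\R^3$ and vanishes outside $B$, and unique continuation gives $\Psi=\mathrm{id}$, hence $g=\mathrm{Eucl}$ and $A=I$. You had already listed both ingredients; the unique continuation step is what closes the argument.
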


The $g$ problem required 13 sources and one may wonder if $g = \rho\, g_{Eucl}$ for some smooth function $\rho$, 
would one need fewer sources. 
\begin{theorem}[The $g=\rho \, g_{Eucl}$ problem]\label{gscalar}
Suppose $\Lc = \Box_g$ is admissible with $g=\rho \, g_{Eucl}$, $p \in \R^3 \setminus \Bbar$,
and $T> \sqrt{\rho_{max}}(|p|+3)$. If
\[
U_{p,g} = U_{p,Eucl}, \qquad \text{on } \pa B \times (-\infty, T),
\]
then $\rho =1$.
\end{theorem}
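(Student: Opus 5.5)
The plan is to reduce Theorem \ref{gscalar} to the $\rho$-problem, Theorem \ref{Roproblem}, by showing that for a conformally Euclidean metric the operator $\Box_g$ differs from $\pa_t^2-\rho^{-1}\Delta$ only by lower order terms. The argument for Theorem \ref{Roproblem} should still run with those terms present, because the travel time and eikonal information it uses comes from the principal part. For $g=\rho\, g_{Eucl}$ in three dimensions we have $g^{ij}=\rho^{-1}\delta_{ij}$ and $|g|^{1/2}=\rho^{3/2}$. Hence
\[
\Delta_g u=\rho^{-3/2}\nabla\cdot(\rho^{1/2}\nabla u)=\rho^{-1}\Delta u+\tfrac12\rho^{-2}\nabla\rho\cdot\nabla u ,
\]
so $\Box_g=\Lc_{A,b}$ with $A=\rho^{-1}I$ and $b=\tfrac12\rho^{-2}\nabla\rho$. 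This operator is admissible: $b$ vanishes outside $B$ since $\rho=1$ there. Note the quantity $\rho_{max}=\sup\rho^{-1}$ in the hypothesis. In the $\rho$-problem the principal coefficient is $\rho$ and the wave speed is $\sqrt{\rho}$; here the principal coefficient is $\rho^{-1}$, the speed is $\rho^{-1/2}$, and travel times are bounded by $\sqrt{\sup\rho}$ times Euclidean lengths. So the time condition has to be re-derived for the new principal coefficient rather than copied.

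The steps, in order, are as follows.

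\textbf{Step 1 (progressing wave expansion).} Using Proposition \ref{prop:forward}, write $U_{p,g}$ near the wavefront as
\[
U_{p,g}=\alpha(x)\,H(t-\tau(x))+\beta(x)\,(t-\tau(x))_+ +\text{smoother terms}.
\]
Here $\tau$ is the $g$-distance from $p$, solving $\rho^{-1}|\nabla\tau|^2=1$, i.e.\ $|\nabla\tau|^2=\rho$. The amplitude $\alpha$ solves a transport equation along $g$-geodesics, and the first order term $b$ enters that equation. The only requirement is that the expansion is valid up to the relevant time, i.e.\ that there are no caustics before $T$. I expect this to come from the same mechanism as in the $\rho$-problem.

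\textbf{Step 2 (boundary data).} Equality of the data on $\pa B\times(-\infty,T)$ gives $\tau=|x-p|$ on $\pa B$ and $\alpha=1/|x-p|$ there, since the jump singularities must match. The hypothesis on $T$ ensures that the whole front, together with the part reflected back through $B$, reaches $\pa B$ before time $T$. The bound is diameter plus distance, which gives the factor $|p|+3$.

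\textbf{Step 3 (energy/Carleman identity).} Follow the proof of Theorem \ref{Roproblem}, combining the eikonal equation, the transport equation and the matching boundary data. The goal is an integral identity over $B$ of the form
\[
\int_B (\text{nonnegative expression in }\rho-1,\ \nabla\tau-\nabla|x-p|)=0,
\]
or equivalently a Bukhgeim--Klibanov type estimate for $U_{p,g}-U_{p,Eucl}$ that forces $\rho=1$.

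The main obstacle is checking that the extra first order term $b=\tfrac12\rho^{-2}\nabla\rho$, and the change of the principal coefficient from $\rho$ to $\rho^{-1}$, preserve the sign structure used in the $\rho$-problem. If that proof depends only on the principal symbol together with the divergence form, the conformal case may even be easier. Indeed, $\Box_g$ is self-adjoint with respect to $\rho^{3/2}\,dx$, so I would first use this weighted divergence form: pair $\Box_g(U_{p,g}-U_{p,Eucl})$ against $\pa_t$ of the difference with weight $\rho^{3/2}$. The difference satisfies
\[
\Box_g(U_{p,g}-U_{p,Eucl})=(\Delta_g-\Delta)U_{p,Eucl},
\]
whose right side is supported in $B$ and is explicit in $\rho-1$. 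I would then exploit the explicit form $U_{p,Eucl}=H(t-|x-p|)/|x-p|$ to extract $\rho-1$ from the leading singularity. If the sign structure fails, the fallback is to treat the term involving $b$ as a perturbation absorbed by a large Carleman parameter.
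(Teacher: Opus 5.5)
Your reduction to Theorem \ref{Roproblem} breaks down exactly at the point you flag as the main obstacle, and the proposal has no mechanism to get past it.

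The proof of Theorem \ref{Roproblem} does not rest on principal-part information plus a sign structure. Its decisive step is linear unique continuation for $\Delta\alpha_p^{-1}=0$. That equation has known coefficients only because $b=0$ there, so $\Ec_{A,b}=\rho\Delta$ and $\rho$ drops out.

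The first-order term is harmless before that step. Proposition \ref{prop:UU0} holds for any admissible $\Lc_{A,b}$. With $A=\rho^{-1}I$ and $b=\frac12\rho^{-2}\nabla\rho$ it gives a smooth $\alpha$ on $\R^3\setminus\{p\}$ with $\alpha=|x-p|$ outside $B$, $|\nabla\alpha|^2=\rho$, and $\nabla\cdot(\rho^{1/2}\nabla\alpha^{-1})=0$. Your Steps 1--2 keep only the eikonal equation and the leading transport equation, and they do not by themselves give the last identity. That identity needs the fact that there is no tail behind the front, $U_{p,g}=v(x)H(t-\alpha(x))$. This fact uses the data up to time $T$, through unique continuation and Proposition \ref{prop:energy}. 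It yields $\Ec_{A,b}v=0$ and hence $v=\alpha^{-1}$.

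At this point the equation for $\alpha^{-1}$ contains the unknown coefficient $\rho^{1/2}$. So it cannot be compared with $|x-p|^{-1}$ by linear unique continuation. The alternatives in your Step 3 do not help:
\begin{itemize}
\item Pairing $\Box_g(U_{p,g}-U_{p,Eucl})$ with the time derivative of the difference is not meaningful. The two waves jump across different surfaces $t=\alpha(x)$ and $t=|x-p|$ inside $B$, so the difference has no finite energy.
\item Bukhgeim--Klibanov estimates need a source positive throughout the medium, and their exterior-source adaptations assume the principal part is known.
\item $b$ involves $\nabla\rho$, a derivative of the unknown. It is not a known small perturbation that a Carleman parameter can absorb.
\end{itemize}

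The missing idea, which is what the paper uses, is to eliminate $\rho$ with the eikonal equation. Substituting $\rho^{1/2}=|\nabla\alpha|$ gives a closed quasilinear equation for $\alpha$ alone,
\[
\nabla\cdot(|\nabla\alpha|\,\nabla\alpha^{-1})=0 \quad\text{on } \R^3\setminus\{p\},
\]
which $|x-p|$ also satisfies. With $\beta=\log\alpha$ this becomes the $3$-Laplace equation $\nabla\cdot(|\nabla\beta|\nabla\beta)=0$.

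The paper writes the weight as $\rho$, which is the four-dimensional formula, and accordingly uses $\beta=\alpha^{1/3}$ and the $4$-Laplacian. With the weight $\rho^{1/2}$, which you computed correctly, the logarithm is the right substitution and the argument is unchanged.

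Since $p\notin\Bbar$, $\beta$ and $\beta_0=\log|x-p|$ are smooth on $\Bbar$ and agree on $\pa B$. Testing the difference of the two equations with $\beta-\beta_0$ gives
\[
\int_B\bigl(|\nabla\beta|\nabla\beta-|\nabla\beta_0|\nabla\beta_0\bigr)\cdot(\nabla\beta-\nabla\beta_0)\,dx=0 .
\]
Strict monotonicity of $\xi\mapsto|\xi|\xi$ then forces $\beta=\beta_0$. Hence $\alpha=|x-p|$ and $\rho=|\nabla\alpha|^2=1$.

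This is exactly the nonnegative integral identity you were hoping for in Step 3. However, it only exists after $\rho$ has been eliminated, and it is written in terms of $\nabla\alpha$ alone.

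Your remark on the time hypothesis is correct. The relevant quantity is $\sup\rho$, which is $A_{max}$ in Proposition \ref{prop:UU0} for $A=\rho^{-1}I$.
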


Theorems  \ref{Roproblem}, \ref{gproblem}, \ref{Aproblem}
are motivated by similar results in \cite{ORS26a} where plane wave sources 
were used instead of the point sources in our case. Our proofs have the same overall plan as in 
\cite{ORS26a}, but significant new difficulties arise in the point source case. Our proofs and the proofs in
\cite{ORS26a} consist of two parts:
\vspc
\begin{itemize}
\item A geometrical result such as our Proposition \ref{prop:EIC}. Our proof is quite similar to the proof of the corresponding result in \cite{ORS26a}.
\item The construction of the diffeomorphism required in Theorem \ref{gproblem} and showing that this diffeomorphism 
is the identity for Theorem \ref{Aproblem}. Using point sources instead of plane waves presents new difficulties for 
this part. For a plane wave source, as in \cite{ORS26a}, the relevant solution of the PDE in the Euclidean case is 
$H(t-x \cdot \omega)$ for some unit vector $\omega$. The proof in \cite{ORS26a} relied on the observation that $\Lc_{A,b} 1=0$
and $x \cdot \omega$ is linear in $\omega$. For the point source case, the relevant solution of the PDE in the Euclidean case is $ |x-p|^{-1} H(t - |x-p|)$, and neither $\Lc_{A,b} |x-p|^{-1} =0$ nor is $|x-p|$ linear in $p$. The new ideas needed
are in the statements and the proofs of Propositions \ref{prop:UU0}, \ref{prop:alphaprop}, \ref{prop:Vprop}.
\end{itemize}
\vspc
The plane wave source version of Theorem \ref{gscalar} was not considered in \cite{ORS26a}. The result for the plane wave can be proved by an argument similar to, but simpler than, the proof of Theorem \ref{gscalar}.

The article is organized in the following fashion. Section \ref{sec:ray} contains the statement of the well-posedness of the 
forward problem for the PDE and the statement of the geometrical result needed for the inverse problem. The proofs of 
these propositions are in the appendix as these proofs are similar to the proofs for the plane wave case in 
\cite{ORS26a}. Section \ref{sec:main} contains the proofs of the theorems, but these theorems (except Theorem
\ref{gscalar} requires an additional idea) are quick consequences 
of Propositions \ref{prop:UU0}, \ref{prop:alphaprop}, \ref{prop:Vprop} which are the crucial results and which are different 
from what is in \cite{ORS26a}. The appendix contains the proofs of the two propositions stated in Section \ref{sec:ray} 
and a simple energy estimate needed in Section \ref{sec:main}.

\noindent
{\bf Acknowledgements.} 
Pradipta Chatterjee acknowledges the support of the
Department of Atomic Energy, Government of India, for the PhD Fellowship.
Rakesh's work was was partly funded by the grant DMS 2307800 from
the National Science Foundation of USA. Abhilash Tushir acknowledges the support of the
Anusandhan National Research Foundation (ANRF), under the scheme National Post-Doctoral
Fellowship, file no.~PDF/2025/005843. 

We used the help of ChatGPT in the proof of Theorem \ref{gscalar}. We describe this help in detail at the beginning of the proof of Theorem \ref{gscalar}; see Subsection \ref{proofthm 1.4}.

%%%%%%%%%%%%%%%%
%%%%%%%%%%%%%%%%%%%

\section{The ray geometry and the forward problem}\label{sec:ray}
For $p \in \R^3$, let $B_p(r)$ denote the $p$-centered ball of radius $r$, and $B^*_p(r) = B_p(r) \setminus \{p\}$.
For a Riemannian metric $g$ on $\R^3$, $\| \cdot \|$ denotes the norm under $g$ and, for a function $f$ on $\R^3$, 
$\nabla_g f$ denotes the vector field corresponding to $df$ under the metric $g$. Also, $|\cdot|$ denotes
the Euclidean norm.

For the admissible operator $\Lc_{A,b}$, the principal symbol is 
\[
f(x,t;\xi,\tau) := -\tau^2 + a_{ij}(x) \xi_i \xi_j = -\tau^2 + g^{ij}(x) \xi_{i} \xi_j,
\qquad (x,t;\xi,\tau) \in T^*(\R^3 \times \R),
\]
where $g(x) = A(x)^{-1}$ is the Riemannian metric we associate with $\Lc_{A,b}$. The null bicharacteristics of 
$\Lc_{A,b}$, parameterized with $t$, are solutions $(x(t), t; \xi(t), \tau(t))$ of the IVP 
\begin{align*}
\dot{x}=\frac{1}{\tau}g^{-1}\xi, \qquad
\dot{\xi_{k}}=\frac{1}{2\tau}(g^{-1}\xi)^{T}\partial_{x_{k}}(g)(g^{-1}\xi), 
\qquad \dot{\tau}=0,
& \qquad k=1,2,3\\
x(t=0)=x_0, \qquad \xi(t=0)= \lambda \xi_0, \qquad \tau(0) = -\lambda, 
\end{align*}
for some $(x_0,0; \lambda \xi_0, -\lambda) \in T^*(\R^3 \times \R)$ with
\[
\xi_0^T g(x_0)^{-1} \xi_0 =1, \qquad \lambda \neq 0. 
\]
Since $g$ is Euclidean outside $B$, one can show that these null bicharacteristics are defined for all 
$t \in \R$, $x(t)$ is independent of $\lambda$, and $\xi(t)$ is homogeneous of degree $1$ in $\lambda$. 

Keeping in mind the identification of $T^*(\R^3 \times \R)$ with $T(\R^3 \times \R)$ through the Lorentzian metric
\[
h(x,t) = \begin{bmatrix} -1 & 0 \\ 0 & g(x) \end{bmatrix},
\qquad (x,t) \in \R^3 \times \R,
\]
one can show (see \cite[Subsection 2.2.1]{ORS26a}) that the null bicharacteristics for $\Lc_{A,b}$
correspond to the {\bf unit speed} geodesics $\gamma$, of the Riemannian metric $g$, with the initial condition
\[
\gamma(0) = x_0, \qquad \gammadot(0) = g(x_0)^{-1} \xi_0;
\]
the relationship is
\[
x(t) = \gamma(t), \qquad \xi(t) = \lambda \, g( \gamma(t)) \, \gammadot(t), \qquad \tau = - \lambda.
\]

For a fixed $p \in \R^3 \setminus \Bbar$, the null bicharacteristics of interest are those corresponding to the 
initial condition
\[
x(0)=p, ~ \xi(t=0) = \lambda \omega, ~ \tau(0) = - \lambda
\]
for arbitrary $\omega \in \Stwo$, $0 \neq \lambda \in \R$, and we only need the half of these null bicharacteristics 
corresponding to $t \geq0$. These half-null bicharacteristics for the $\omega, \lambda$ will be called 
the {\bf $p$-null bicharacteristics}. 

Given $p \in \R^3 \setminus \Bbar$,  a unit vector $\omega$ in $\R^3$, let $\gamma_{p,\omega}:[0,\infty) \to \R^3$
denote the (unit speed) half-geodesic, corresponding to $t\geq 0$, with 
\[
\gamma_{p,\omega}(0)=p, \qquad \gammadot_{p,\omega}(0) = \omega.
\]
The geodesics $\gamma_{p,\omega}$, collectively, will be called {\bf $p$-geodesics}. 

Consider the submanifold of $T(\R^{3+1})$ traced out by the tangent vectors of the $p$-geodesics
\[
\Lambda_p := \{ [\gamma_{p,\omega}(t), t; \; \lambda \gammadot_{p,\omega}(t), \lambda ] : \omega \in \Stwo, ~ t>0, ~ 
\lambda\in \R, ~ \lambda\neq 0 \},
\]
and its dual submanifold $\Lambda_p^*$ of $T^*(\R^{3+1})$, under the Lorentzian metric 
\[
h(x,t) = \begin{bmatrix} -1 & 0 \\ 0 & g(x) \end{bmatrix}, \qquad (x,t) \in \R^{3+1},
\]
given by
\[
\Lambda_p^* := \{  [\gamma_{p,\omega}(t), t; \; \lambda \gammadot_{p,\omega}(t)^\flat, -\lambda ] : \omega \in \Stwo, ~ t>0, ~ 
\lambda\in \R, ~ \lambda\neq 0 \},
\]
where $v^\flat$ is the dual of the tangent vector $v$ under the Riemannian metric $g$.
Then $\Lambda_p^*$ is the submanifold traced out by the $p$-null bicharacteristics.

A curve $\gamma:[a,b] \to \R^3$ will be said to be {\em admissible} if it is continuous and piecewise smooth. The length 
of an admissible curve $\gamma$ is defined as
\[
L(\gamma) := \int_a^b \| \gammadot(t)\| \, dt,
\]
where $\| \gammadot(t)\|$ denotes the length of $\gammadot(t)$ with the metric $g$. Note that $L(\gamma)$ is 
independent of the parametrization chosen for $\gamma$. For any $p,q\in\mathbb{R}^{3}$, the Riemannian distance
between $p,q$ is defined as
\[
d(p,q):=\inf\{L(\gamma): \text{ $\gamma$ an admissible curve joining $p$ to $q$}\}.
\]
One can show that $(\mathbb{R}^3,d)$ is a metric space, and $d$ is equivalent  to Euclidean metric because
\begin{align}\label{equimetric}
    \sqrt{g_{\min}} \, |p-q|\leq d(p,q)\leq \sqrt{g_{\max}} \, |p-q|.
\end{align}

Given $p \in \R^3 \setminus \Bbar$, define the time of first arrival function $\alpha_p : \R^3 \to \R$ with
\[
\alpha_p(x) = d(p,x), \qquad x \in \R^3.
\]
We see that $\alpha_p$ is Lipschitz continuous on $\R^3$ and, since $g$ is Euclidean outside $B$, we have
\[
\alpha_p(x) = |x-p|, \qquad \text{if } |x-p| < |p|-1.
\]
Define the map $\Phi_p : [0,\infty) \times \Stwo \to \R^3$ with
\[
\Phi_p(s,\omega) = \gamma_{p,\omega}(s), \qquad s \geq0, ~ |\omega|=1.
\]

The following defines a desirable property, outside $B$, of the $p$-geodesics. For example, this occurs when $g$ is the Euclidean metric.
\begin{definition}[Exterior Injectivity Condition (EIC)] 
Suppose $(\mathbb{R}^3,g)$ is an admissible Riemannian metric, $p \in \R^3 \setminus \Bbar$, 
and $T>\sqrt{g_{max}} \,(|p|+1)$. We say $(g,p, T)$ satisfies the \textit{Exterior Injectivity Condition} if the following holds: if there are $p$-geodesics $\gamma_1, \gamma_2$ and $t_1, t_2 \in (0,T)$ so that
$\gamma_1(t_1) = \gamma_2(t_2) \in \R^3 \setminus B$, then $\gamma_1 = \gamma_2$.
\end{definition}
\vspc
\noindent
Given $T > \sqrt{g_{max}}(|p|+1)$, the EIC condition prohibits different $p$-geodesics intersecting outside 
$B$ at times lying in $(0,T)$.

The following important proposition asserts that if $(g,p,T)$ satisfies the 
EIC then $\Phi$ restricted to $(0,T) \times \Stwo$ is
a diffeomorphism onto its range. So a condition on the $p$-geodesics outside $B$ forces the behavior of the
$p$-geodesics inside $B$. This proposition is an analog of   \cite[Proposition 2.3]{ORS26a}
with the geodesics with initial velocity parallel to a fixed direction $\omega$  replaced, in this article,
by geodesics starting at a fixed point $p$.
%
%%%%%%%%%%%%%
\begin{proposition}\label{prop:EIC}
Suppose $(\mathbb{R}^{3},g)$ be an admissible Riemannian metric, $p \in \R^3 \setminus \Bbar$,
and $(g,p,T)$ satisfies the EIC. Define
\[
D_{p,T} := \Phi_p ( (0,T) \times \Stwo );
\]
then $D_{p,T}$ is an open subset of $\R^3$, $\Phi_p : (0,T) \times \Stwo \to D_{p,T}$ is a diffeomorphism,
and $\Bpone \subset D_{p,T}$.
Further $\alpha_p$ is a smooth function on $D_{p,T}$ with the following properties:
\vspc
\begin{itemize}
\item for $x \in D_{p,T}$,  $x = \gamma_{p,\omega}(t)$ iff $t = \alpha_p(x)$;
\item $\| \nabla_g \alpha_p \|^2 =1$ on $D_{p,T}$;
\item if $x \in D_{p,T}$ then
$\nabla_g \alpha_p(x) = \gammadot_{p,\omega}(t)$ for the unique $(t,\omega) \in (0,T) \times \Stwo$ with 
$\Phi_p(t,\omega) = x$.
\end{itemize}
\end{proposition}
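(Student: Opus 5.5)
The plan is to first show that $\Phi_p$ is injective on $(0,T)\times\Stwo$ and is a local diffeomorphism there, i.e.\ there are no conjugate points. Invariance of domain then gives that $D_{p,T}$ is open and $\Phi_p$ is a diffeomorphism onto it. The properties of $\alpha_p$ follow from minimality of the geodesics, which will come out of the same argument.

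\textbf{Step 1 (exterior behaviour).} Since $g$ is Euclidean outside $B$ and $|p|>1$, every $p$-geodesic is a straight line until it first meets $\Bbar$. A geodesic that enters $B$ and later exits is again a straight line after exit. Each $p$-geodesic spends $g$-length at most about $\sqrt{g_{max}}\cdot 2$ inside $B$; I will check this via the EIC rather than take it for granted, since a geodesic could in principle linger. The hypothesis $T>\sqrt{g_{max}}(|p|+1)$ then ensures that, for $t$ close to $T$, every geodesic is outside $\Bbar$.

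\textbf{Step 2 (no conjugate points; the main obstacle).} I would argue by contradiction, in the style of \cite[Proposition 2.3]{ORS26a}. Suppose $d\Phi_p$ is singular at some $(t_0,\omega_0)$ with $t_0<T$. Then by the index form, for $t$ slightly larger than $t_0$ the geodesic $\gamma_{p,\omega_0}$ is no longer locally minimizing. More usefully, nearby geodesics $\gamma_{p,\omega}$ cross it: by a degree or intersection argument, the Jacobi field vanishing at $t_0$ forces neighbouring geodesics to intersect $\gamma_{p,\omega_0}$ near time $t_0$.

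The task is then to push such an intersection outside $B$ within time $T$. Consider the map $\omega\mapsto \Phi_p(\tau,\omega)$ on the sphere, at a time $\tau<T$ when all geodesics have left $\Bbar$; this is the "front" at time $\tau$.

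\begin{itemize}
\item If there were no conjugate points before $\tau$, this front would be an immersed sphere lying outside $B$.
\item Its parallel translates along the outward normal are Euclidean parallel surfaces, since outside $B$ the geodesics are straight lines normal to the front.
\item At a conjugate point one loses injectivity of the front map. In Euclidean space the front then develops a fold, and two distinct geodesics meet outside $B$, contradicting the EIC.
\end{itemize}

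Concretely, I would use the following. Outside $B$ the map $\Phi_p(t,\cdot)$, for $t$ near $T$, is injective by the EIC. It is then an embedding, by compactness of $\Stwo$ plus injectivity plus an immersion property. The immersion property I expect to obtain from the EIC as well, since a degenerate Jacobi field outside $B$ produces crossings of nearby rays in flat space, which is a straightforward Euclidean computation.

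From this, $\Phi_p(\cdot,\omega)$ on $(0,T)$ gives a smooth family of embedded spheres, and a degree argument shows the map $(0,T)\times\Stwo\to\R^3$ is proper onto its image and of degree one. Combined with local diffeomorphism, it is a covering map of the simply connected region, hence injective. Establishing the local diffeomorphism inside $B$ is where the difficulty lies. I would try to show that the earliest conjugate time $t_c(\omega)$, if less than $T$, produces self-intersections of the time-$T$ front, via a counting argument on preimages.

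\textbf{Step 3 ($\alpha_p$).} Fix $x\in D_{p,T}$. Take a minimizing geodesic from $p$ to $x$; its length is at most $\sqrt{g_{max}}(|p|+1)<T$ when $x\in\Bpone$, so it is some $\gamma_{p,\omega}$ with $t=\alpha_p(x)<T$. This also gives $\Bpone\subset D_{p,T}$. Injectivity of $\Phi_p$ forces this to be the only $(t,\omega)$ with $\Phi_p(t,\omega)=x$, so $x=\gamma_{p,\omega}(t)$ iff $t=\alpha_p(x)$. For general $x\in D_{p,T}$ outside $\Bpone$, I would use that the set of points where the minimizer is realized with time $<T$ is open and closed in $D_{p,T}$, which is connected.

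Hence $\alpha_p$ is the first coordinate of $\Phi_p^{-1}$, so it is smooth. The Gauss lemma gives $\nabla_g\alpha_p(x)=\gammadot_{p,\omega}(t)$ and $\|\nabla_g\alpha_p\|=1$.
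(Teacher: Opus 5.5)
\textbf{There is a genuine gap.} The two steps that carry the proof are announced but not proved.

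In Step 1, the claim that every $p$-geodesic has left $\Bbar$ before time $T$ is deferred (``I will check this via the EIC''). The bound you quote on the $g$-length spent in $B$ only holds for minimizing geodesics, and minimality is exactly what has to be established.

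In Step 2, you say yourself that excluding conjugate points inside $B$ ``is where the difficulty lies''. The front/fold/counting sketch does not supply this. Your covering/degree argument for injectivity presupposes that local-diffeomorphism property. It also needs properness of $\Phi_p$ onto $D_{p,T}$ and a degree computation, neither of which is carried out.

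\textbf{The missing idea is to work with minimality rather than with the geometry of fronts.} Let $\Sigma=\{x:|x-p|=|p|+1\}$ and $T_1=\sqrt{g_{max}}(|p|+1)<T$. Every $q\in\Sigma$ is reached by a minimizing $p$-geodesic at time $d(p,q)\le T_1$. Since $q\notin B$, the EIC makes this the only $p$-geodesic reaching $q$ before time $T$. A $p$-geodesic meets $\Sigma$ only once: it is a straight line near $\Sigma$ and never re-enters $\Bpone$. Hence any $p$-geodesic that reaches $\Sigma$ before time $T$ does so at time $d(p,q)\le T_1$ and is minimizing up to that point. This yields everything you need:

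(a) \emph{Every geodesic exits before $T$.} The set of $\omega$ whose geodesic reaches $\Sigma$ before time $T$ is open, by transversal crossing in the flat region. It is closed, because the crossing times are bounded by $T_1<T$. Hence it is all of $\Stwo$. The paper reaches the same conclusion through a homology/retraction argument for the injective map $\Omega:\Sigma\to\Stwo$.

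(b) \emph{Injectivity.} Two distinct $p$-geodesics can only meet at some $q\in B$. Both are minimizing to their exit points on $\Sigma$. Concatenate the shorter of the two pieces from $p$ to $q$ with $\gamma_2$ from $q$ to its exit point. This gives a broken minimizer, hence a smooth geodesic, which forces $\gamma_1=\gamma_2$.

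(c) \emph{No conjugate points.} Suppose $d\Phi_p$ is singular at $(t_0,\omega_0)$. Choose $t_1\in(t_0,T)$ with $\gamma_{p,\omega_0}$ already on or beyond $\Sigma$ at time $t_1$, and set $q_1=\gamma_{p,\omega_0}(t_1)$. Jacobi's theorem says $\gamma_{p,\omega_0}$ is not minimizing to $q_1$. So a different $p$-geodesic reaches $q_1\notin B$ at time $d(p,q_1)<T$, contradicting the EIC.

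\textbf{Your Step 3 is then fine.} For any $x=\Phi_p(t,\omega)\in D_{p,T}$, the minimizer to $x$ has length at most $t<T$. Injectivity alone therefore gives $t=\alpha_p(x)$, and your open--closed argument there is unnecessary.
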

\vspc
\noindent
Here, for any smooth function $f$ on $\R^3$, $\nabla_g f = df^\sharp$ is the vector field in $T(\R^3)$ identified with
$df \in T^*(\R^3)$ through the metric $g$. In coordinates $\nabla_g f = g^{-1} \nabla f$.

We now state carefully the existence, uniqueness and some properties of the solution of the IVP \eqref{eq:Upde}, \eqref{eq:Upic}. We will use the observation, obtained using standard arguments, that 
$\WF( H(t) \delta(x-p) ) = \Zc$, where
\[
\Zc := \{ (p,0; \xi, \tau) : (0,0) \neq (\xi, \tau) \in \R^{3+1} \} \cup \{ (p,t; \xi,0) : t>0, 0 \neq \xi \in \R^3\}.
\]

\begin{proposition}\label{prop:forward}
Suppose $\Lc_{A,b}$ is admissible and $p \in \R^3 \setminus \Bbar$. The IVP \eqref{eq:Upde}, \eqref{eq:Upic} has a unique distributional solution $U_p$. Further  
\vspc
\begin{enumerate}[label=(\alph*)]
\item $\supp U_p \subset \{(x,t) \in \R^3 \times \R : t \geq \alpha_p(x) \}$, 
\item $U_p \in L^2_{loc}(\R^3 \times \R)$,
\item ${\displaystyle \WF (U_p) = \Lambda_p^* \, \cup \, \Zc}$,
\item $U_p(x,t) = \dfrac{ H(t - |x-p|)}{|x-p|}$ for $(x,t) \in \R^3 \times (-\infty, |p|-1)$.
\end{enumerate}
\end{proposition}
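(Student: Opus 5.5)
The plan is to build $U_p$ as an explicit Euclidean part plus a correction, and to get everything else from standard hyperbolic theory. Write $\Lc = \Lc_{A,b}$ and set
\[
U^0(x,t) := \frac{H(t-|x-p|)}{|x-p|},
\]
which is the solution for $\Box$, i.e.\ $\Box U^0 = 4\pi H(t)\delta(x-p)$. Choose a cutoff $\chi \in C_c^\infty(\R^3)$ with $\chi = 1$ near $p$ and $\supp \chi \cap \Bbar = \emptyset$. Since $\Lc = \Box$ on $\supp\chi$, the function $F := \Lc(\chi U^0) - 4\pi H(t)\delta(x-p)$ is built only from commutator terms $[\Box,\chi]U^0$. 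These are supported where $\nabla\chi \neq 0$, away from $p$, and vanish for $t<0$; there $U^0$ is a bounded piecewise smooth function with a jump across the cone, so $F \in L^2_{loc}$, and in fact $F \in H^{-1}_{loc}$-in-time-regular enough.

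Next solve $\Lc W = -F$ with $W = 0$ for $t<0$. For the admissible operator (smooth coefficients, uniformly hyperbolic, with lower order terms) the standard energy method, using the energy estimate the paper puts in its appendix, gives a unique $W \in C(\R; H^1_{loc}) \cap C^1(\R; L^2_{loc})$. Set $U_p = \chi U^0 + W$. Uniqueness follows because the difference of two solutions solves a homogeneous problem with zero past, and duality with the forward problem for the formal adjoint $\Lc^*$ (also admissible) shows any such distribution vanishes.

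For (b), $\chi U^0 \in L^2_{loc}$ since $|x-p|^{-1}$ is locally square integrable in $\R^3$, and $W$ is more regular. For (a), I would use finite speed of propagation measured in $d$: solutions vanish outside the domain of influence $\{t \geq d(\supp\text{ source}, x)\}$. Applying this directly to $U_p$ is cleanest by approximating $\delta(x-p)$ by mollified sources $\delta_\epsilon$ supported in $B_p(\epsilon)$, getting support in $t \geq \alpha_p(x) - C\epsilon$, and passing to the limit. For (d), the function $U_p - U^0$ solves $\Lc(U_p-U^0) = (\Box-\Lc)U^0$, which is supported in $\Bbar \times \{t \geq |p|-1\}$ because $U^0 = 0$ for $t<|x-p|$. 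Hence, by the zero-past uniqueness and finite speed, $U_p - U^0 = 0$ for $t<|p|-1$.

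The main obstacle is (c). For $\WF(U_p) \subset \Lambda_p^*\cup\Zc$, I would use H\"ormander's propagation of singularities for the strictly hyperbolic $\Lc$. Near $t=0$ and near $x=p$ we have $U_p = U^0$ by (d), and $\WF(U^0)$ is computed explicitly: it consists of the conormal of the forward cone, which is exactly the start of $\Lambda_p^*$, together with $\Zc$. Away from $\supp(H\delta_p)$, any singularity lies in the characteristic set and is invariant under the bicharacteristic flow. Tracing backward in $t$, each such bicharacteristic must reach the region $t<|p|-1$, where the wavefront set is known, and there it lies in $\Lambda_p^*$. A careful point is that backward bicharacteristics of $\Lc$ are defined globally and exit every compact set as $t$ decreases, because $g$ is Euclidean outside $B$; so no singularity can be trapped.

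For the reverse inclusion, I would use that propagation is an equivalence: the forward flow carries singularities onward as well. Since the initial singularities fill the whole conormal near the cone, they propagate to all of $\Lambda_p^*$. Points of $\Zc$ are included because $\Lc U_p$ has wavefront set exactly $\Zc$, and $\WF(\Lc U_p) \subset \WF(U_p)$. At points of $\Zc$ over $t>0$, i.e.\ $(p,t;\xi,0)$, these are noncharacteristic covectors, so elliptic regularity gives equality there.
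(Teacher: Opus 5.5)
\textbf{The gap: $F$ is not in $L^2_{loc}$, and $W$ is not in $H^1_{loc}$.} Existence and (b) both rest on your construction of $W$, and that step fails as written. You assert that $F=\Lc(\chi U^0)-4\pi H(t)\delta(x-p)$ lies in $L^2_{loc}$ because $U^0$ is bounded and piecewise smooth near $\supp\nabla\chi$. But $F=-2\nabla\chi\cdot\nabla U^0-(\Delta\chi)U^0$ contains $\nabla U^0$. Differentiating the Heaviside factor produces a surface measure on the cone:
\[
F=h(x)\,\delta(t-|x-p|)+k(x)\,H(t-|x-p|),\qquad h=\frac{2\nabla\chi\cdot(x-p)}{|x-p|^{2}}\not\equiv 0 .
\]
So $F\notin L^2_{loc}$, and the energy estimate with an $L^2$ source does not apply. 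The regularity you claim for $W$ is also false. By (d), for $t<|p|-1$ you have $W=U_p-\chi U^0=(1-\chi)U^0$, which jumps across $t=|x-p|$ wherever $\chi\neq 1$. Hence $W\notin C(\R;H^1_{loc})$, and ``$W$ is more regular'' is not a valid reason for $U_p\in L^2_{loc}$.

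\textbf{How to repair it.} Work one derivative lower, as the paper does. Since $h\,\delta(t-|x-p|)=\partial_t(hH(t-|x-p|))$, you have $F\in H^{-1}_{loc}$ with support in $t\ge 0$. The forward problem gains one derivative, so $W\in L^2_{loc}$.

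Because the coefficients are $t$-independent, you can even stay inside your energy framework. Solve $\Lc W_1=-hH(t-|x-p|)$ and $\Lc W_2=-kH(t-|x-p|)$ in $C(\R;H^1_{loc})\cap C^1(\R;L^2_{loc})$, and set $W=\partial_tW_1+W_2\in C(\R;L^2_{loc})$.

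\textbf{Comparison with the paper.} With that fix, the rest of your plan is a valid alternative route. The paper obtains existence, uniqueness and (a) from B\"ar's Green operators and the causal future $J_+(M)$. It obtains (c) by citing Taylor's description of $\WF$ of the forward solution as $\Zc$ plus its null flowout. You instead build the solution by hand, prove uniqueness by duality, and prove (c) by two-sided propagation from the explicit early-time solution $U^0$. Your route makes the inclusion $\Lambda_p^*\subset\WF(U_p)$ more transparent: it comes from the nonzero jump of $U^0$ across the cone. That is exactly what is needed to leave $\Zc$ at $t=0$, where the propagation theorem alone says nothing.

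\textbf{Minor points of precision.} First, propagation must be applied off $\WF(\Lc U_p)=\Zc$, not off $\supp(H\delta_p)$. Backscattered null bicharacteristics can cross $\{x=p,\ t>0\}$, but there $\tau\neq 0$, so they avoid $\Zc$. Second, $U_p=U^0$ near $x=p$ holds only for $t<|p|-1$, which is all your backward tracing uses. Third, $\Lc^*$ is not admissible in the paper's sense, since it acquires a zeroth-order term. This is harmless for the energy estimates behind your duality argument.
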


These two propositions are the point source analogs of corresponding propositions in \cite{ORS26a} for plane wave sources. We postpone the proofs to the appendix as the proofs are similar to those in \cite{ORS26a}.

%%%%%%%%%%%%%%%%%%%%%%%%%
%%%%%%%%%%%%%%%%%%%%%%%%%%
\section{Proofs of the theorems}\label{sec:main}

We indicate the dependence of $U_p$ on $A,b$ or $g$ by writing $U_{p,A,b}$, $U_{p,g}$ and $U_{p,A}$ when $b=0$. 
We use $U_{p,Eucl}$ to represent the solution corresponding to $A=I, b=0$. We use the notation from Proposition
\ref{prop:EIC} and define the the elliptic operator
\[
\Ec := a_{ij} \pa_i \pa_j + b_i \pa_i.
\]
For convenience, for a Riemannian metric $g$ on $\R^3$, and a function $f$ on $\R^3$, $\nabla f$ will stand for 
$\nabla_gf$.

%%%%%%%%%%
\subsection{The crucial propositions}\label{sec:relations}

The following proposition is the first step in the proofs of all the theorems.

\begin{proposition}\label{prop:UU0}
Suppose $\Lc_{A,b}$ is admissible, $p \in \R^3 \setminus \Bbar$ and $T> \sqrt{A_{max}}(|p|+3)$. If
\[
U_{p,A,b} = U_{p,Eucl} \qquad \text{on } \pa B \times (-\infty, T)
\]
then there is a function $\alpha_p$ on $\R^3$ such that the following holds (here $g=A^{-1}$):
\vspc
\begin{enumerate}[label=(\alph*)]
\item\label{p3.1itema} $\alpha_p$ is smooth and never zero on $\R^3 \setminus \{p\}$;
\item\label{p3.1itemb} $ \| \nabla_g \alpha_p \|^2 =1$ on $\R^3 \setminus \{p\}$;
\item\label{p3.1itemc} $\Ec_{A,b} \, \alpha_p^{-1}=0$ on $\R^3 \setminus \{p\}$;
\item\label{p3.1itemd} $\alpha_p(x) = |x-p|$ for $x \in \R^3 \setminus B$.
\end{enumerate}
\end{proposition}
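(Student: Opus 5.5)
Let $U := U_{p,A,b}$ and $U^0 := U_{p,Eucl} = H(t-|x-p|)/|x-p|$. The plan is to use the boundary data to show that $U$ and $U^0$ coincide outside $B$ up to time $T$. We then read off the geometry from the singularities of $U$ and the PDE from its leading amplitude.

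\textbf{Step 1 (exterior unique continuation).} Set $W := U - U^0$ on $(\R^3\setminus \Bbar)\times(-\infty,T)$. There $\Lc_{A,b}=\Box$, so away from the source line $\{p\}\times[0,\infty)$ we have $\Box W = 0$. Also $W=0$ on $\pa B\times(-\infty,T)$ and $W=0$ for $t<0$.

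The source terms cancel at $x=p$, so $W$ solves the homogeneous wave equation on the exterior region. Hence $W$ solves the exterior Dirichlet problem with zero data and zero initial data. By uniqueness for this problem (an energy estimate, as in the appendix), $W=0$ on $(\R^3\setminus B)\times(-\infty,T)$.

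Consequently $U=U^0$ there. In particular, $\WF(U)$ over the exterior equals $\WF(U^0)$, which consists only of the Euclidean cone $t=|x-p|$ together with $\Zc$.

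\textbf{Step 2 (EIC and the eikonal function).} By Proposition \ref{prop:forward}(c), $\WF(U)=\Lambda_p^*\cup\Zc$. Comparing with the exterior wave front set from Step 1, every $p$-geodesic $\gamma$ with $\gamma(t)\notin B$ and $t<T$ satisfies $t=|\gamma(t)-p|$, and its direction equals the Euclidean radial direction from $p$.

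Suppose two $p$-geodesics meet outside $B$ at times $t_1,t_2<T$. Then they share the same point, and the Euclidean covector there is determined by that point. Hence their tangents agree, and since the meeting point lies outside $B$, where geodesics are straight lines, the two geodesics coincide. This gives the EIC for $(g,p,T)$.

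Proposition \ref{prop:EIC} then gives that $\alpha_p=d(p,\cdot)$ is smooth on $D_{p,T}\supset \overline{B_p(|p|+1)}$, with $\|\nabla_g\alpha_p\|=1$. Every geodesic leaving $B$ in time less than $T$ is a Euclidean ray from $p$. Thus $\alpha_p(x)=|x-p|$ outside $B$ within the range $t<T$. This range includes all of $\R^3\setminus B$ once we extend: we define $\alpha_p(x):=|x-p|$ far out, and this is consistent because beyond $B$ the geodesics are radial lines.

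The hypothesis $T>\sqrt{A_{max}}(|p|+3)$ is what guarantees that the geodesics through $\Bbar$ have exited before time $T$. Indeed, by \eqref{equimetric}, $d(p,x)\le\sqrt{g_{max}}|x-p|$ with $g_{max}=A_{max}$. This yields (a), (b) and (d).

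\textbf{Step 3 (transport equation).} Inside $D_{p,T}$ and for $t$ near $\alpha_p(x)$, the solution has a progressing wave expansion
\[
U = a_0(x)H(t-\alpha_p(x)) + a_1(x)(t-\alpha_p)_+ + \cdots,
\]
where $a_0$ solves the transport equation $2\la\nabla_g\alpha_p,\nabla a_0\ra + (\Ec\alpha_p)a_0=0$ and satisfies $a_0\sim 1/|x-p|$ near $p$.

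The first-order coefficient $a_1$ solves a transport equation with source term $\Ec a_0$. Step 1 shows that $U=U^0$ outside $B$, and in particular the jump $a_0$ and the higher coefficients match those of $U^0$ there. So $a_0=1/|x-p|=1/\alpha_p$ and $a_1=0$ outside $B$ near exit points.

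\textbf{Main obstacle.} The key step is to show that $a_0=\alpha_p^{-1}$ and $\Ec a_0=0$ inside $B$. This cannot be obtained by integrating along rays from one side only, so my first attempt would be the following.

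Along each $p$-geodesic, $a_1$ satisfies a linear ODE with source $\Ec a_0$ and vanishes both near $p$ and after exit. This alone does not force the source to be zero. Instead, I would compute directly that $v:=\alpha_p^{-1}$ satisfies
\[
\Ec v = -\alpha_p^{-2}\Ec\alpha_p + 2\alpha_p^{-3},
\]
using $\|\nabla\alpha_p\|=1$. The transport equation for $a_0$ is $2\la\nabla\alpha_p,\nabla a_0\ra=-(\Ec\alpha_p)a_0$. Comparing the ratio $a_0\alpha_p$ along rays with its boundary values at both ends should give $a_0=\alpha_p^{-1}$.

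If that identification fails, the fallback is an energy/Duhamel argument. Set $V:=U-\alpha_p^{-1}H(t-\alpha_p)$, which satisfies
\[
\Lc V = -\delta(t-\alpha_p)(\cdots) - H(t-\alpha_p)\,\Ec\alpha_p^{-1}.
\]
One would then integrate $V$ against the time-reversed solution $U$ to extract $\int \Ec(\alpha_p^{-1})\,\phi=0$ for enough test functions $\phi$. This integral step is where the real difficulty lies.
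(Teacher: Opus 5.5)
Your Steps 1 and 2 are essentially the paper's argument: exterior agreement $U=U_{p,Eucl}$ for $t<T$, comparison of wave front sets to obtain the EIC, then Proposition \ref{prop:EIC}. They do yield (a), (b), (d). The gap is (c), and your first attempt cannot close it. Put $m:=\alpha_p a_0$ and follow $\gamma=\gamma_{p,\omega}$. Using the transport equation, $\|\nabla_g\alpha_p\|=1$ and $\alpha_p(\gamma(t))=t$, you get
\[
\frac{d}{dt}\log m(\gamma(t)) = \frac{1}{t}-\frac12\,(\Ec\alpha_p)(\gamma(t)) = \frac{t^2}{2}\,\bigl(\Ec\,\alpha_p^{-1}\bigr)(\gamma(t)).
\]
So knowing $m=1$ near $p$ and $m=1$ after exit only says that a weighted integral of $\Ec\,\alpha_p^{-1}$ along each $p$-geodesic vanishes. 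That is one scalar condition per ray of a two-parameter family, not the pointwise identity $\Ec\,\alpha_p^{-1}=0$. The same objection applies to $a_1$, as you observed, and the Duhamel/test-function fallback is not yet an argument. The underlying problem is that progressing-wave coefficients only see $U$ near the front $t=\alpha_p(x)$. Note also that $T>\sqrt{A_{max}}(|p|+1)$ already suffices for your Steps 1--2. The extra $2\sqrt{A_{max}}$ in the hypothesis is not about geodesics exiting $\Bbar$; it is exactly the time margin consumed by the step you are missing.

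The missing idea is unique continuation for the wave equation \emph{behind} the front. From the singular support of $U$ one writes $U=u(x,t)H(t-\alpha_p(x))$ with $u$ smooth on $Q_+=\{0<\alpha_p(x)<t<T\}$. Then $\Lc_{A,b}u_t=0$ in $Q_+$, and $u_t=0$ outside $B$, where $u=1/|x-p|$. Set $\alpha_{max}:=\max_{\Bbar}\alpha_p\le\sqrt{g_{max}}(|p|+1)$, so that $T-\alpha_{max}>2\sqrt{g_{max}}$. The unique continuation argument of \cite[Proposition B.1]{ORS26a} then gives $u_t=u_{tt}=0$ on $\Bbar\times\{t=\tau\}$ for some $\tau$. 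An energy estimate forward in time gives $u_t=0$ above $\tau$. Proposition \ref{prop:energy}, applied backward down to the characteristic surfaces $t=\alpha_p(x)+\ep$, gives it below; the flux through those surfaces is nonnegative precisely because $\|\nabla_g\alpha_p\|=1$. Hence $u_t\equiv 0$ in $Q_+$ and $U=v(x)H(t-\alpha_p(x))$ exactly. Now $\Lc_{A,b}U=0$ away from $p$ produces two equations: the transport equation $2a_{ij}\pa_j\alpha_p\,\pa_i v+(\Ec\alpha_p)v=0$ (coefficient of $\delta(t-\alpha_p)$), and $\Ec v=0$ (coefficient of $H(t-\alpha_p)$). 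Together they give $\Ec(\alpha_p v)=0$. Since $\alpha_p v=1$ outside $B$ and $\Ec 1=0$, elliptic unique continuation yields $\alpha_p v\equiv 1$, i.e.\ $v=\alpha_p^{-1}$, and so $\Ec\,\alpha_p^{-1}=\Ec v=0$. The second equation $\Ec v=0$ is precisely what front-only information cannot give you.
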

%
%%%%%%%%%

The next proposition claims certain relations between $\alpha_p, \alpha_q$ for different $p,q$. These relations
will be used to construct the diffeomorphism promised in Theorem \ref{gproblem} and this diffeomorphism
turns out to be the identity under the conditions of Theorem \ref{Aproblem}. 
Here the inner product is the one associated with $g=A^{-1}$ and $\nabla$ represents $\nabla_g$.

\begin{proposition}\label{prop:alphaprop}
Suppose $\Lc_{A,b}$ is admissible and we take $g=A^{-1}$. Suppose $k \in \R^3 \setminus \Bbar$ and
\vspc
\begin{enumerate}[label=(\alph*)]
\item\label{p3.2itema} $\alpha_k$ is smooth on $\R^3 \setminus \{k\}$
\item\label{p3.2itemb} $\|\nabla \alpha_k\|^2=1$ on $\R^3 \setminus \{k\}$
\item\label{p3.2itemc} $\Ec_{A,b} \alpha_k^{-1} =0$ on $\R^3 \setminus \{k\}$
\item\label{p3.2itemd} $\alpha_k(x) = |x-k|$ for $x \in \R^3 \setminus B$.
\end{enumerate}
\vspc
Then the following holds on $\R^3$:
\vspc
\begin{enumerate}[label=(\roman*)]
\item\label{p3.2itemi}  $\Ec_{A,b} (\alpha_k^2)=6$, ~~ $\displaystyle \Ec_{A,b} (\|\nabla\alpha_k^2\|^2)=24$.
\item\label{p3.2itemii}  If $p,q, (p+q)/2\in \R^3 \setminus \Bbar$ and \ref{p3.2itema}-\ref{p3.2itemd} hold for $k=p,q,(p+q)/2$, then
\[
\Ec_{A,b} \langle\nabla\alpha_p^2,\nabla\alpha_q^2\rangle=24.
\] 
\end{enumerate}
\end{proposition}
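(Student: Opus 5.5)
The plan rests on two calculus identities for $\Ec=\Ec_{A,b}$. Since the principal part of $\Ec$ is $a_{ij}\pa_i\pa_j=g^{ij}\pa_i\pa_j$ with $g=A^{-1}$, and $\Ec$ has no zeroth order term, we have for smooth $u,v$ and smooth $\varphi:\R\to\R$
\[
\Ec(uv)=u\,\Ec v+v\,\Ec u+2\la\nabla u,\nabla v\ra,
\qquad
\Ec(\varphi(u))=\varphi'(u)\,\Ec u+\varphi''(u)\,\|\nabla u\|^2 .
\]
Everything below follows from these identities, assumptions \ref{p3.2itema}--\ref{p3.2itemd}, and the maximum principle.

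For \ref{p3.2itemi}, write $\alpha=\alpha_k$ on $\R^3\setminus\{k\}$. Apply the chain rule with $\varphi(s)=s^{-1}$ and use \ref{p3.2itemb}, \ref{p3.2itemc}:
\[
0=\Ec\,\alpha^{-1}=-\alpha^{-2}\,\Ec\alpha+2\alpha^{-3}.
\]
Hence $\Ec\alpha=2/\alpha$ on $\R^3\setminus\{k\}$. Then
\[
\Ec(\alpha^2)=2\alpha\,\Ec\alpha+2\|\nabla\alpha\|^2=4+2=6
\]
on $\R^3\setminus\{k\}$. To include $x=k$, note that $k\notin\Bbar$. So on a neighbourhood of $k$ we have $\Ec=\Delta$ and, by \ref{p3.2itemd}, $\alpha^2=|x-k|^2$, which is smooth with $\Delta|x-k|^2=6$. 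Thus $\alpha_k^2$ is smooth on all of $\R^3$ and $\Ec(\alpha_k^2)=6$ everywhere. Next, $\|\nabla\alpha_k^2\|^2=4\alpha_k^2\|\nabla\alpha_k\|^2=4\alpha_k^2$ on $\R^3\setminus\{k\}$. This also holds near $k$, since $|\nabla|x-k|^2|^2=4|x-k|^2$ there. Therefore $\Ec(\|\nabla\alpha_k^2\|^2)=4\cdot 6=24$ on $\R^3$.

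For \ref{p3.2itemii}, set $m=(p+q)/2$. The key step is a polarization identity, which I would prove via the maximum principle. Define
\[
w:=\alpha_m^2-\tfrac12\left(\alpha_p^2+\alpha_q^2\right)+\tfrac14|p-q|^2 ,
\]
which is smooth on $\R^3$ by part \ref{p3.2itemi}. Outside $B$, assumption \ref{p3.2itemd} together with the Euclidean parallelogram identity
\[
|x-m|^2=\tfrac12|x-p|^2+\tfrac12|x-q|^2-\tfrac14|p-q|^2
\]
gives $w=0$. By part \ref{p3.2itemi}, $\Ec w=6-3-3=0$ on $\R^3$. Since $\Ec$ is uniformly elliptic on $\Bbar$ with smooth coefficients and no zeroth order term, and $w=0$ on $\pa B$, the weak maximum principle gives $w\equiv0$ in $B$. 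Hence
\[
\alpha_m^2=\tfrac12\left(\alpha_p^2+\alpha_q^2\right)-\tfrac14|p-q|^2 \quad\text{on }\R^3 .
\]

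Taking gradients, $\nabla\alpha_m^2=\tfrac12(\nabla\alpha_p^2+\nabla\alpha_q^2)$, so
\[
\|\nabla\alpha_m^2\|^2=\tfrac14\left(\|\nabla\alpha_p^2\|^2+\|\nabla\alpha_q^2\|^2+2\la\nabla\alpha_p^2,\nabla\alpha_q^2\ra\right).
\]
Apply $\Ec$ and use part \ref{p3.2itemi} for $k=p,q,m$:
\[
24=\tfrac14\left(24+24+2\,\Ec\la\nabla\alpha_p^2,\nabla\alpha_q^2\ra\right),
\]
which gives $\Ec\la\nabla\alpha_p^2,\nabla\alpha_q^2\ra=24$.

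I expect the only delicate points to be justifying smoothness at the singular points $k$, which is handled by the Euclidean structure near $k$, and the maximum-principle step forcing $w\equiv0$ inside $B$. The latter step is where the quadratic (rather than linear) dependence of $|x-p|^2$ on $p$ is handled.
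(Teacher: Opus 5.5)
Your proof is correct. Part (i) is the same computation as in the paper, including the observation that $\alpha_k^2$ is smooth at $k$ because $k\notin\Bbar$.

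For (ii), the mechanism is also the paper's. A linear combination of the $\alpha^2$'s that is constant outside $B$ and annihilated by $\Ec_{A,b}$ (by (i)) must be constant on all of $\R^3$. One then takes $g$-norms of gradients and applies $\Ec_{A,b}$ again. Your bookkeeping, however, is more direct.

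The paper takes a longer route:
\begin{itemize}
\item It first proves, for arbitrary quadruples with $c-d=\pm(p-q)$, that $\Ec_{A,b}\la\nabla\alpha_c^2,\nabla\alpha_d^2\ra=\Ec_{A,b}\la\nabla\alpha_p^2,\nabla\alpha_q^2\ra$.
\item It then proves the same identity for $c+d=p+q$. To do so it takes both the norm of $\nabla\alpha_c^2=\nabla\alpha_p^2+\nabla\alpha_q^2-\nabla\alpha_d^2$ and its inner product with $\nabla\alpha_c^2$, and cancels the cross terms using the first claim.
\item Only at the end does it specialize to $c=d=(p+q)/2$.
\end{itemize}

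After that specialization the key identity is just $2\nabla\alpha_m^2=\nabla\alpha_p^2+\nabla\alpha_q^2$, and squaring it alone gives the result, which is exactly your argument. The paper's detour buys general parallelogram-type invariances of $\Ec_{A,b}\la\nabla\alpha_c^2,\nabla\alpha_d^2\ra$. These are needed neither for (ii) nor for the later use of (i) and (ii) in Proposition~\ref{prop:Vprop}, so your route is the shorter one.

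Your replacement of unique continuation by the weak maximum principle on $B$ is also legitimate and slightly more elementary. Here $w$ vanishes on the whole exterior, hence on $\pa B$. Moreover, $\Ec_{A,b}$ has no zeroth-order term and is uniformly elliptic on $\Bbar$. So Dirichlet uniqueness already forces $w\equiv 0$.
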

\vspc

The next proposition constructs the diffeomorphism promised in Theorem \ref{gproblem} and it also plays a role
in the proof of Theorem \ref{Aproblem}.
\begin{proposition}\label{prop:Vprop} Suppose $\Lc_{A,b}$ is admissible, $g=A^{-1}$, and 
$\alpha_p$ has the properties \ref{p3.1itema}-\ref{p3.1itemd} of Proposition \ref{prop:UU0}, for each $p$ in the set
\[
\Pc = \{ 2e_i\}_{i=1}^3 \cup \{ 2(e_j + e_k) : j < k \}_{j,k=1}^3
\cup \{ e_1 + 2e_2, e_2 + 2e_3, e_3 + 2e_1, e_1+e_2+e_3 \}.
\]
For $i,j,k =1,2,3$ and $j \neq k$, define the smooth function
\begin{align}
V^i_{j,k} := \frac{1}{4} \left ( \alpha_{2 e_i}^2 - \alpha_{2(e_j + e_k)}^2 - |2 e_i|^2 + |2(e_j+e_k)|^2 \right ),
\qquad \text{on } \R^3
\label{eq:Vijkdef}
\end{align}
and the map $\Psi : \R^3 \to \R^3$ with
\[
\Psi(x) = \left ( V_{1,2}^2(x), V_{2,3}^3(x), V_{3,1}^1(x) \right ), \qquad x \in \R^3.
\]
Then $\Psi$ is a diffeomorphism with $\Psi=$id outside $B$ and $\Psi^*(Eucl)=g$.
\end{proposition}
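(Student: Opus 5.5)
We use Proposition \ref{prop:alphaprop} to show that the components of $\Psi$ have constant pairwise $g$-inner products of gradients. Set $u_k := \alpha_k^2$ for $k \in \Pc$, smooth on $\R^3$. By Proposition \ref{prop:UU0}\ref{p3.1itemd}, $u_k(x) = |x-k|^2$ outside $B$. Then
\[
V^i_{j,k}(x) = \tfrac14\big(|x-2e_i|^2 - |x-2(e_j+e_k)|^2 - 4 + 8\big) = x\cdot(e_j+e_k-e_i) + 1 - 1 = x \cdot (e_j+e_k-e_i)
\]
outside $B$. The constants have been arranged to cancel. For $(i,j,k) = (2,1,2)$, $(3,2,3)$, $(1,3,1)$ this gives $x_1$, $x_2$, $x_3$ respectively. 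Hence $\Psi = \mathrm{id}$ outside $B$.

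\textbf{Constant Gram matrix.} The main step is to show that the Gram matrix $G_{ab} := \la \nabla \Psi_a, \nabla \Psi_b\ra_g$ equals $\delta_{ab}$ on all of $\R^3$. Each $\Psi_a$ is $\tfrac14(u_p - u_q)$ plus a constant, so $G_{ab}$ is a combination of terms $\la \nabla u_p, \nabla u_q\ra$ with $p,q \in \{2e_i\} \cup \{2(e_j+e_k)\}$.

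By Proposition \ref{prop:alphaprop}\ref{p3.2itemi} and \ref{p3.2itemii}, $\Ec \la\nabla u_p,\nabla u_q\ra = 24$ whenever $p$, $q$ and $(p+q)/2$ all lie in $\Pc$ (for $p = q$ this is part \ref{p3.2itemi}). This is exactly why $\Pc$ contains the midpoints. For instance, $(2e_1 + 2(e_1+e_2))/2 = 2e_1+e_2$ is not in $\Pc$, so I must check which pairs are actually needed and use the available midpoints: $(2e_1+2e_2)/2 = e_1+e_2$ type points, $e_1+2e_2$, and $e_1+e_2+e_3$. In expanding $G_{ab}$, the coefficients sum to zero, so the $24$'s cancel and $\Ec G_{ab} = 0$. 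The bookkeeping of which midpoints are needed is the combinatorial heart of the argument. If a pair lacks its midpoint, I would first try rewriting $V^i_{j,k}$ via an alternative combination of $u$'s; the translation structure $u_k \sim |x|^2 - 2x\cdot k + |k|^2$ makes such linear identities plausible.

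\textbf{Maximum principle.} Next, $G_{ab} - \delta_{ab}$ satisfies $\Ec(\cdot) = 0$ on $\R^3$ and vanishes outside $B$, since there $\nabla\Psi_a = e_a$ and $g$ is Euclidean. The operator $\Ec$ is uniformly elliptic with no zeroth-order term, so the maximum principle on $B$ gives $G \equiv I$, i.e.\ $\Psi^*(Eucl) = g$. In coordinates, $D\Psi\, g^{-1} D\Psi^T = I$, which is the same as $g = D\Psi^T D\Psi$.

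\textbf{Global diffeomorphism.} This identity makes $D\Psi$ everywhere invertible, so $\Psi$ is a local diffeomorphism. It is also proper, being the identity outside $B$, so it is a covering map of $\R^3$. Since $\R^3$ is simply connected and $\Psi$ is injective near infinity (identity there), it has degree one and is a global diffeomorphism. Alternatively, $\Psi$ is a local isometry $(\R^3,g) \to (\R^3,Eucl)$ that equals the identity outside $B$. Since $B$ is compact, $(\R^3,g)$ is complete, so $\Psi$ is a Riemannian covering onto $\R^3$. Because $\R^3$ is simply connected, this covering has a single sheet, so $\Psi$ is bijective.

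The main obstacle is the midpoint bookkeeping in the Gram-matrix step. The maximum-principle and covering arguments are routine.
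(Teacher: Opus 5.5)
Your overall structure matches the paper: identity outside $B$, then $\Ec(G_{ab}-\delta_{ab})=0$, then a maximum principle, then a global-diffeomorphism argument. Your maximum-principle step (in place of unique continuation) and your properness/covering step are fine. The gap is the step you defer. Your claim that the $24$'s cancel is true for the diagonal entries; for example, $16\,G_{11}$ only involves the pair $2e_2,\,2(e_1+e_2)$, whose midpoint $e_1+2e_2$ lies in $\Pc$. It is false for the off-diagonal entries:
\[
16\,G_{12}=\langle\nabla u_{2e_2},\nabla u_{2e_3}\rangle-\langle\nabla u_{2e_2},\nabla u_{2(e_2+e_3)}\rangle-\langle\nabla u_{2(e_1+e_2)},\nabla u_{2e_3}\rangle+\langle\nabla u_{2(e_1+e_2)},\nabla u_{2(e_2+e_3)}\rangle .
\]
These terms need the midpoints $e_2+e_3$, $2e_2+e_3$, $e_1+e_2+e_3$ and $e_1+2e_2+e_3$, and only $e_1+e_2+e_3$ is in $\Pc$. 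Note also that $e_j+e_k\notin\Pc$; the set contains $2(e_j+e_k)$, so your list of available midpoints is wrong.

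Your fallback of rewriting each $\Psi_a$ as another combination of $u$'s and expanding directly cannot repair this. The pairs of distinct points of $\Pc$ whose midpoint is in $\Pc$ form a single $6$-cycle, $2e_1,\,2(e_1+e_3),\,2e_2,\,2(e_1+e_2),\,2e_3,\,2(e_2+e_3)$; the other four points pair only with themselves. Suppose $\nabla\Psi_1=\nabla\sum\lambda_k u_k$ and $\nabla\Psi_2=\nabla\sum\mu_l u_l$, where every cross pair $k\neq l$ with $\lambda_k\mu_l\neq0$ has its midpoint in $\Pc$. Then $\lambda$ and $\mu$ must both be supported on one edge of that cycle. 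That would make $\nabla\Psi_1$ and $\nabla\Psi_2$ parallel, which is false outside $B$.

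The missing idea is polarization through three auxiliary functions. By Proposition \ref{prop:alphaprop}\ref{p3.2itemi}, every $V^i_{j,k}$ satisfies $\Ec V^i_{j,k}=0$ on $\R^3$. Since also $V^i_{j,k}=x\cdot(e_j+e_k-e_i)$ outside $B$, unique continuation (or your maximum principle) gives, on all of $\R^3$,
\[
V^1_{2,3}=-\Psi_1+\Psi_2+\Psi_3,\qquad V^2_{3,1}=\Psi_1-\Psi_2+\Psi_3,\qquad V^3_{1,2}=\Psi_1+\Psi_2-\Psi_3 .
\]
Expanding the squared gradient norms of these three and solving the linear system gives, for example,
\[
\langle\nabla\Psi_1,\nabla\Psi_2\rangle=\tfrac12\textstyle\sum_{a}\|\nabla\Psi_a\|^2-\tfrac14\bigl(\|\nabla V^1_{2,3}\|^2+\|\nabla V^2_{3,1}\|^2\bigr),
\]
and similarly for the other two cross terms. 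Every term on the right is the squared gradient norm of a single $V^i_{j,k}$. Its defining pair $2e_i,\,2(e_j+e_k)$ has midpoint $e_i+e_j+e_k\in\{e_1+2e_2,\,e_2+2e_3,\,e_3+2e_1,\,e_1+e_2+e_3\}\subset\Pc$. So $\Ec$ annihilates each term, and $\Ec G_{ab}=0$ for $a\neq b$.

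This is the actual role of $e_1+e_2+e_3$ in $\Pc$: it serves the three auxiliary functions, not any cross term in a direct expansion. With this step supplied, the rest of your argument goes through.
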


%%%%%%%%%%%%%%%%%%
%%%%%%%%%%%%%%%%%%%
\subsection{Proof of Theorem \ref{Roproblem}}

Here $A=\rho I$, $b=0$, $g=\rho^{-1} I$ and $\Ec_{A,b} = \rho \Delta$. From Proposition \ref{prop:UU0},
\[
\Delta \alpha_p^{-1} = 0 \qquad \text{on } \R^3 \setminus \{p \}
\]
and
\[
\alpha_p(x) = |x-p|, \qquad x \in \R^3 \setminus B.
\]
Since $\Delta |x-p|^{-1} =0$ on $\R^3 \setminus \{p\}$, from unique continuation for 
second order elliptic operators, we have $\alpha_p(x) = |x-p|$ on $\R^3$. Hence on $\Bbar$, we have
\[
1 = \| \nabla_g \alpha_p \|^2 = \rho(x) |\nabla \alpha_p(x)|^2 = \rho(x) | (x-p)/|x-p|| = \rho(x).
\]

%%%%%%%%%%%%%%%%%%%%%%%%%%%%
\subsection{Proof of Theorem \ref{gproblem}}

Here $A,b$ are such that $\Ec_{A,b} = \Delta_g$.
From Proposition \ref{prop:UU0}, we know that for each $p \in \Pc$, $\alpha_p$ has the properties \ref{p3.1itema}-\ref{p3.1itemd} in 
Proposition \ref{prop:UU0}.  Hence the theorem follows from Proposition \ref{prop:Vprop}.

%%%%%%%%%%%%%%%%%%%
\subsection{Proof of Theorem \ref{Aproblem}}

Here $b=0$ and $g=A^{-1}$. 

From Proposition \ref{prop:UU0}, we know that for each $p \in \Pc$, $\alpha_p$ has properties \ref{p3.2itema}-\ref{p3.2itemd} listed in 
Proposition \ref{prop:UU0}. So, from Proposition \ref{prop:Vprop}, we know that $g = \Psi^{*}(Eucl)$.

Now, from the definition of $V^i_{j,k}$ and \ref{p3.2itemi} of Proposition \ref{prop:alphaprop}, we have
\[
\Lc_{A} V^i_{j,k}  =0, \qquad \text{on } \R^3.
\]
Also, from the definition of $V^i_{j,k}$ we know that
\[
V^i_{j,k}(x) = x \cdot(-e_i + e_j + e_k), \qquad \text{on } \R^3 \setminus B.
\]
Noting that $  \Lc_A (x \cdot(-e_i + e_j + e_k) ) =0$ on $\R^3$, unique continuation for second elliptic 
operators implies
\[
V^i_{j,k}(x) = x \cdot(-e_i + e_j + e_k), \qquad \text{on } \R^3.
\]
Hence
\[
\Psi(x) = ( x \cdot e_1, x \cdot e_2, x \cdot e_3) = x, \qquad x \in \R^3.
\]
Therefore $g = Eucl$.
%%%%%
%%%%%%%%%%%%

\subsection{Proof of Theorem \ref{gscalar}}\label{proofthm 1.4} 

We used ChatGPT's help in this proof. We reduced the problem to a unique continuation question for a nonlinear PDE
and we asked ChatGPT to find results in the literature addressing this unique continuation question. ChatGPT showed that a change of the dependent variable reduces the question to one answered by a 
well known theorem in \cite{Evans-pde-book}.

Here $g=\rho I$ with $\rho=1$ outside $B$.
From Proposition \ref{prop:UU0}, there is a smooth function
$\alpha$ on $\R^3\setminus \{p\}$ with $\alpha(x) = |x-p|$ on $\R^3 \setminus B$ and (using the summation notation)
\[
|\nabla \alpha|^2 = \rho, \qquad \pa_i ( \rho\, \pa_i ( \alpha^{-1})) =0, \qquad \text{on } \R^3 \setminus \{p\}.
\]
Hence $\nabla \alpha$ is never zero on $\R^3 \setminus \{p\}$ and
\begin{align}
\pa_i ( |\nabla \alpha|^2 \pa_i (\alpha^{-1})) = 0,   \qquad \text{on } \R^3 \setminus \{p\}.
\label{eq:onlyalpha}
\end{align}
For future use, one observes that  $\alpha(x) = |x-p|$ satisfies \eqref{eq:onlyalpha}; this is also clear from the fact
that $\alpha(x) = |x-p|$ when $\rho \equiv 1$. 

Below we use the terminology from \cite[Section 9.1]{Evans-pde-book}.
Now $\alpha>0$ on $\R^3\setminus \{p\}$, so $\beta=\alpha^{1/3}$ is a smooth function on $\R^3\setminus\{p\}$.
One can verify that \eqref{eq:onlyalpha} implies
\begin{align}
\nabla \cdot (|\nabla \beta|^2 \, \nabla \beta) =0, \qquad \text{on } \R^3 \setminus \{p\},
\label{eq:betaonly}
\end{align}
and $\beta(x) = |x-p|^{1/3}$ on $\R^3$. Further, $|x-p|^{1/3}$ is another solution of \eqref{eq:betaonly}. Now the vector field
\[
F(\xi) = |\xi|^2 \xi, \qquad \xi \in \R^3
\]
is monotone because
\[
F'(\xi) = |\xi|^2 I + 2 \xi \otimes \xi
\]
is positive definite. Hence, by \cite[Section 9.1, Theorem 4]{Evans-pde-book}, 
\[
\beta(x) = |x-p|^{1/3} \qquad \text{on } \R^3
\]
implying $\alpha(x) = |x-p|$ on $\R^3$. Hence
\[
\rho = |\nabla \alpha|^2 = 1 \qquad \text{on } \R^3 \setminus \{p\}.
\]

%%%%%%%%%%%%%%
%%%%%%%%%%%

\subsection{Proof of Proposition \ref{prop:UU0}}

Since $p$ is fixed, we do not show the dependence on $p$ in the notation, so $U_p, U_{p,g_{Eucl}}, \alpha_p$ 
will be written as $U, U_{Eucl}, \alpha$. 

Since $T> \sqrt{g_{max}} (|p|+1)$, as argued in the proof of Proposition \ref{prop:EIC}, we have $\Bpone \subset
D_T$.

We are given that $U=U_{Eucl}$ on $\pa B \times (-\infty, T)$. 
By imitating the proof of a similar claim in the proof of \cite[Proposition 3.1]{ORS26a}, one can show that
\begin{equation}
U=U_{Eucl} = \frac{H(t-|x-p|)}{|x-p|}, \qquad \text{on } (\R^3 \setminus B) \times (-\infty, T).
\label{eq:UH}
\end{equation}

The equation \eqref{eq:UH} implies $\WF(U) = \WF(U_{Eucl})$ on $(\R^3 \setminus B) \times (-\infty, T)$, 
hence $\Lambda_{p,g} = \Lambda_{p,Eucl}$ when restricted to $(\R^3 \setminus B) \times (-\infty, T)$. 
Hence the following holds:
\vspc
\begin{itemize}
\item $D_T  = B_p(T)$,
\item For any $p$-geodesic $\gamma$, the part of $\gamma|_{(0,T)}$ lying in $\R^3 \setminus \Bbar$ must be a part of 
a half line originating at $p$,
\item If $\gamma$ is a $p$-geodesic and $\gamma(t) \in D_T \setminus B$ for some $t \in (0,T)$ then
$L(\gamma|_{[0,t]}) = |\gamma(t)-p|$.
\end{itemize}
\vspc
Since no two half-lines through $p$ can intersect and $T>\sqrt{g_{max}}(|p|+1)$, we see that $(g,p,T)$ satisfy the EIC.
Therefore Proposition \ref{prop:EIC} implies that
$\Phi : (0,T) \times \Stwo \to B_p^*(T)$ is a diffeomorphism and $\alpha(x)$ is a smooth function on $B_p^*(T)$.
Further, from the third bullet above,
\begin{align}
\alpha(x) = d(p,x) = |x-p|, \qquad \text{for } x \in B_p(T) \setminus B.
\label{eq:alphaout}
\end{align}
The singular support of $U$ is the projection of $\Lambda_p$ onto $\R^3 \times \R$, hence
\[
\text{sing supp } U_p = \{ (\gamma_{p,\omega}(t), t) : \omega \in \Stwo, ~ t >0 \} \cup \{ (p,t) : t \geq 0 \}.
\]
For each $0<t<T$, $\gamma_{p,\omega}(t) \in B_p^*(T)$ and 
\[
\alpha( \gamma_{p,\omega}(t) ) = t,
\]
hence 
\[
\text{sing supp } U \cap \{ t<T \} = \{ (x, \alpha(x) ) : x \in B_p^*(T) \} \cup \{ (p,t) : 0 \leq t <T \}.
\]
Now $U \in L^2_{loc}(\R^3 \times \R)$ and $U$ is supported in the region $t \geq \alpha(x)$, so
\[
U(x,t) = u(x,t) H( t- \alpha(x) ), \qquad (x,t) \in \R^3 \times (-\infty, T)
\]
with $u(x,t)$ smooth in the punctured conical region 
\[
Q_+ := \{ (x,t) : 0 < \alpha(x) < t < T \}.
\]
Hence $\Lc_{A,b} U =0$ on $(\R^3 \setminus \{p\}) \times \R$ and $U=U_{Eucl}$ 
on $(\R^3 \setminus B) \times (-\infty, T)$
implies
\begin{subequations}
\begin{align}
\Lc_{A,b} u =0, & \qquad \text{on } Q_+,
\label{eq:upde}
\\
u(x,t) = \frac{1}{|x-p|}, & \qquad \text{on } Q_+ \setminus (B \times (-\infty, T)).
\label{eq:uout}
\end{align}
\end{subequations}
Differentiating these equations w.r.t $t$, we obtain
\begin{subequations}
\begin{align}
\Lc_{A,b} u_t =0, & \qquad \text{on } Q_+,
\label{eq:utde}
\\
u_t(x,t) = 0, & \qquad \text{on } Q_+ \setminus (B \times (-\infty, T)).
\label{eq:utout}
\end{align}
\end{subequations}
Define
\[
\alpha_{max} := \max_{x \in \Bbar} \alpha(x),
\]
and note that $\alpha_{max} \leq \sqrt{g_{max}}(|p|+1)$. The radius of $\Bbar$, measured from $0$, under the 
metric $g$, is at most $\sqrt{g_{max}}$ and, by hypothesis,
\[
T - \alpha_{max} \geq T - \sqrt{g_{max}} (1+|p|) > 2 \sqrt{g_{max}},
\]
so the unique continuation argument in the proof of \cite[Proposition B.1]{ORS26a} implies
\[
u_t=0, ~ u_{tt}=0, \qquad \text{on } \Bbar \times \{t=\tau\},
\]
for some $\tau < (T+ \alpha_{max})/2$. Then a standard energy estimate argument applied to 
\eqref{eq:utde}, \eqref{eq:utout} implies $u_t=0$ on $\Bbar \times [\tau,T)$. Further, an application of
Proposition \ref{prop:energy} with $\beta(x) = \alpha(x) + \ep$, for all small $\ep>0$, implies
\[
u_t = 0 \qquad \text{in } Q_+.
\]
Hence, there is a smooth function $v(x)$ on $B^*_p(T)$ such that
\begin{subequations}
\begin{align}
U(x,t) = v(x) H(t-\alpha(x)), & \qquad \text{on } (\R^3 \setminus \{p\}) \times (-\infty, T),
\label{eq:Uv}
\\
v(x) = \frac{1}{|x-p|}, & \qquad \text{on } B^*_p(T) \setminus B.
\label{eq:vout}
\end{align}
\end{subequations}
Now $\Lc_{A,b} U =0$ on $(\R^3 \setminus \{p\}) \times \R$. On the set $(\R^3 \setminus \{p\}) \times (-\infty, T)$, a standard calculation shows that
\begin{align*}
\Lc_{A,b} U(x,t) = ( 2 a_{ij} \pa_j \alpha \, \pa_i v + (\Ec_{A,b} \alpha ) v ) \, \delta (t- \alpha(x))
- (\Ec_{A,b} v ) H(t-\alpha(x)),
\end{align*}
hence
\begin{subequations}
\begin{align}
2 a_{ij} \pa_j \alpha \, \pa_i v + (\Ec_{A,b} \alpha ) v =0, & \qquad \text{on } B^*_p(T)
\label{eq:vtrans}
\\
\Ec_{A,b} v = 0, & \qquad \text{on } B^*_p(T).
\label{eq:EAbv}
\end{align}
\end{subequations}
Using \eqref{eq:vtrans} and \eqref{eq:EAbv} we have
\[
\Ec_{Ab} (\alpha v) = \alpha \Ec_{A,b}v + (\Ec_{A,b} \alpha) v + 2 a_{ij} \pa_j \alpha \pa_i v = 0,
\qquad \text{on } B^*_p(T).
\]
Also, from \eqref{eq:alphaout} and \eqref{eq:vout} we have
\[
\alpha v = 1, \qquad \text{on } B_p^*(T) \setminus B.
\]
Noting that $\Ec_{A,b} 1 =0$ on $\R^3$, from unique continuation for second order elliptic PDEs, we have
\[
\alpha v = 1 \qquad \text{on } B_p^*(T).
\]
Hence
\[
v = \alpha^{-1} \qquad \text{on } B_p^*(T),
\]
so \eqref{eq:EAbv} implies
\[
\Ec_{A,b} \alpha^{-1} = 0, \qquad \text{on } B_p^*(T).
\]
Now $\alpha(x) = |x-p|$ on $B_p(T) \setminus B$, so if we extend $\alpha(x)$ to 
$\R^3$ by taking $\alpha(x)=|x-p|$ outside
$B$ then $\alpha(x)$ is smooth on $\R^3 \setminus \{p\}$ and, since $A=I, b=0$ outside $B$ and 
$\Delta |x-p|^{-1} =0$ on $\R^3 \setminus \{p\}$ we see that \ref{p3.1itema}, \ref{p3.1itemc}, \ref{p3.1itemd} are true. Finally, we already know that
$\|\nabla_g \alpha_p \|^2=1$ on $B^*_p(T)$ and $g=I$ outside $B$ and 
$|\nabla |x-p| |^2=1$ on $\R^3 \setminus \{p\}$ we see that \ref{p3.1itemb} also holds.

%%%%%%%%%%%%%%%%%
%%%%%%%%%%%%%%%%%%%%%%%%

\subsection{Proof of Proposition \ref{prop:alphaprop}}

$\alpha_k$ is smooth on $\R^3 \setminus \{k\}$ so $\alpha_k^2$ is smooth on $\R^3$ because
$\alpha_k(x) = |x-k|$ for $x$ near $k$. 
\vspc
\begin{enumerate}[label=(\roman*)]
\item On $\R^3 \setminus \{k\}$,
\begin{align*}
\Ec_{A,b} \alpha_k^{-1} & = a_{ij} \pa_i \pa_j \alpha_k^{-1} + b_i \pa_i \alpha_k^{-1}
 =  - a_{ij} \pa_i \left ( \alpha_k^{-2} \pa_j \alpha_k \right ) - b_i \alpha_k^{-2} \pa_i \alpha_k
 \\
 & = - \alpha_k^{-2} \Ec_{A,b} \alpha_k + 2 \alpha_k^{-3} a_{ij} \pa_i \alpha_k \pa_j \alpha_k
 \\
 & = - \alpha_p^{-2} \Ec_{A,b} \alpha_k + 2 \alpha_k^{-3} \|\nabla \alpha_k\|^2.
\end{align*}
Hence \ref{p3.1itemb} and \ref{p3.1itemc} imply
\[
\Ec_{A,b} \alpha_k = \frac{2}{\alpha_k}.
\]
Therefore, on $\R^3 \setminus \{k\}$, hence on $\R^3$, we have
\begin{align*}
\Ec_{A,b} \alpha_k^2 & 
 =  2 a_{ij} \pa_i \left ( \alpha_k \pa_j \alpha_k \right ) + 2 \alpha_k b_i  \pa_i \alpha_k
  = 2\alpha_k \Ec_{A,b} \alpha_k + 2 a_{ij} \pa_i \alpha_k \pa_j \alpha_k
   = 4 + 2 = 6.
\end{align*}
We have $\nabla_g \alpha_k^2 = 2 \alpha_k \nabla_g \alpha_k$, hence
$\|\nabla \alpha_k^2\|^2 = 4 \alpha_k^2$. Therefore
\[
\Ec_{A,b} \left ( \|\nabla \alpha_k^2\|^2 \right ) = 4 \Ec_{A,b} ( \alpha_k^2) = 24.
\]

%%%%%
\item 
This will be a consequence of a sequence of relations.
Suppose $c,d,p,q \in \R^3 \setminus \Bbar$ and $\alpha_k$ has properties \ref{p3.2itema}-\ref{p3.2itemd} for  $k=c,d,p,q$.

Our first claim is that if $c-d= \pm(p-q)$ then
\begin{align}
\Ec_{A,b} \langle\nabla\alpha_{c}^2,\nabla\alpha_{d}^2\rangle=
\Ec_{A,b} \langle\nabla\alpha_{p}^2,\nabla\alpha_{q}^2\rangle,
\qquad \text{on } \R^3.
\label{eq:cmdpmq}
\end{align}
We prove this claim first when $c-d=p-q$. For $x \in \R^3 \setminus B$, we have 
\begin{align*}
\alpha_c^{2}-\alpha_d^2-\alpha_p^2+\alpha_{q}^2& = |x-c|^2 - |x-d|^2 - |x-p|^2 + |x-q|^2
\\
& = 2x\cdot(d-c+p-q)+|c|^2-|d|^2-|p|^2+|q|^2
\\
& =|c|^2-|d|^2-|p|^2+|q|^2.
\end{align*}
Define 
\[
f(x) := \alpha_{c}^{2}(x)-\alpha_{d}^2(x)-\alpha_{p}^2(x)+\alpha_{q}^2(x)-(|c|^2-|d|^2-|p|^2+|q|^2);
\]
then $f(x)=0$ on $\R^3 \setminus B$ and, using \ref{p3.2itemi}, we have $\Ec_{A,b} f =0$ on $\R^3$.
So unique continuation for second order elliptic operators 
implies $f=0$ on $\R^3$, therefore
\begin{align}
\nabla \alpha_c^2 - \nabla \alpha_d^2 = \nabla \alpha_p^2 - \nabla \alpha_q^2, \qquad \text{on } \R^3,
\qquad \text{if } c-d=p-q.
\label{eq:nablareln}
\end{align}
Hence
\[
\left \| \nabla \alpha_c^2 - \nabla \alpha_d^2 \right \|^2 = \left \| \nabla \alpha_p^2 - \nabla \alpha_q^2 \right \|^2.
\]
This results in
\[
 \| \nabla \alpha_c^2\|^2 +  \| \nabla \alpha_d^2  \|^2 
 - 2 \la \nabla \alpha_c^2, \nabla \alpha_d^2 \ra
=  \| \nabla \alpha_p^2\|^2 +  \| \nabla \alpha_q^2  \|^2 
 - 2 \la \nabla \alpha_p^2, \nabla \alpha_q^2 \ra.
 \]
 Hence applying $\Ec_{A,b}$ to the equation and using \ref{p3.2itemi}, we obtain
 \[
 \Ec_{A,b} \la \nabla \alpha_c^2, \nabla \alpha_d^2 \ra
 =  \Ec_{A,b} \la \nabla \alpha_p^2, \nabla \alpha_q^2 \ra,
 \qquad \text{on } \R^3.
 \]
 Since $c-d=-(p-q)$ implies $c-d=q-p$, and the inner product is symmetric, we have proved \eqref{eq:cmdpmq}
 in both cases.
 
 Now we claim that if $c+d=p+q$ then
 \begin{align}
  \Ec_{A,b} \langle\nabla\alpha_{c}^2,\nabla\alpha_{d}^2\rangle=
\Ec_{A,b} \langle\nabla\alpha_{p}^2,\nabla\alpha_{q}^2\rangle,
\qquad \text{on } \R^3.
\label{eq:cpdppq}
\end{align}
We prove this claim next. Now $c+d=p+q$ implies $c-p=q-d$, hence \eqref{eq:nablareln} implies
 \[
 \nabla \alpha_c^2 - \nabla \alpha_p^2 = \nabla \alpha_q^2 - \nabla \alpha_d^2
 \]
 which implies
 \begin{align}
  \nabla \alpha_c^2 =  \nabla \alpha_p^2 + \nabla \alpha_q^2 - \nabla \alpha_d^2.
  \label{eq:cpqd}
 \end{align}
We obtain two relations from \eqref{eq:cpqd}. Taking the norm, we obtain
\begin{align*}
 \|\nabla \alpha_c^2\|^2 &=    \|\nabla \alpha_p^2\|^2 +  \|\nabla \alpha_{q}^2\|^2+    \|\nabla \alpha_{d}^2\|^2
 +2\la \nabla\alpha_{p}^2,\nabla\alpha_q^2\ra
 - 2\la \nabla\alpha_q^2, \nabla \alpha_d^2\ra
 -2\la \nabla \alpha_{d}^2, \nabla \alpha_{p}^2\ra
\end{align*}
and, taking the $g$ inner product of \eqref{eq:cpqd} with $\nabla \alpha_c^2$, we obtain
\begin{align*}
    \|\nabla\alpha_c^2\|^2&= \langle \nabla\alpha_{p}^2,\nabla\alpha_c^2\rangle+\langle \nabla\alpha_{q}^2,\nabla\alpha_c^2\rangle-\langle \nabla\alpha_d^2,\nabla\alpha_c^2\rangle.
\end{align*}
Applying $\Ec_{A,b}$ to both equations and using (i), we obtain
\begin{align*}
 24&= \Ec_{A,b} \left ( - \langle \nabla\alpha_{p}^2,\nabla\alpha_{q}^2\rangle
 + \langle \nabla\alpha_{q}^2,\nabla\alpha_{d}^2\rangle
 + \langle \nabla\alpha_{d}^2,\nabla\alpha_{p}^2\rangle \right )
 \\
24&= \Ec_{A,b} \left ( \langle \nabla\alpha_{p}^2,\nabla\alpha_{c}^2\rangle
+\langle \nabla\alpha_{q}^2,\nabla\alpha_{c}^2\rangle
- \langle \nabla\alpha_{d}^2,\nabla\alpha_{c}^2\rangle \right ).
\end{align*}
Subtracting the second equation from the first and using \eqref{eq:cmdpmq}, we obtain
\begin{align*}
 0&= -\Ec_{A,b} \langle \nabla\alpha_{p}^2, \nabla\alpha_{q}^2\rangle
 +\Ec_{A,b} \langle \nabla\alpha_{d}^2,\nabla\alpha_{c}^2\rangle
   +\underbrace{\Ec_{A,b} \langle \nabla\alpha_{q}^2, \nabla\alpha_{d}^2\rangle 
 -\Ec_{A,b} \langle \nabla\alpha_{p}^2, \nabla\alpha_{c}^2\rangle}_{=0 \text{ since }q-d=-(p-c)}
 \\
 & \qquad 
 +\underbrace{\Ec_{A,b} \langle \nabla\alpha_{d}^2,\nabla\alpha_{p}^2\rangle
 -\Ec_{A,b} \langle \nabla\alpha_{q}^2,\nabla\alpha_c^2\rangle}_{=0\text{ since }d-p=q-c}.
\end{align*}
Hence
\begin{equation*}
\Ec_{A,b}
\langle\nabla\alpha_{c}^2,\nabla\alpha_{d}^2\rangle=\Ec_{A,b} \langle\nabla\alpha_{p}^2,\nabla\alpha_{q}^2\rangle,
\qquad \text{on } \R^3,
\end{equation*}
proving \eqref{eq:cpdppq}.

We now finish the proof of \ref{p3.2itemii}. Since $p+q=\frac{p+q}{2}+\frac{p+q}{2}$ and \ref{p3.2itema}-\ref{p3.2itemd} hold for
$k=p,q,(p+q)/2$, using \ref{p3.2itemi} and \eqref{eq:cpdppq}, we have
\[
 \Ec_{A,b} \langle \nabla \alpha_{p}^2, \nabla \alpha_q^2\rangle
 =
 \Ec_{A,b} \langle \nabla \alpha_{\frac{p+q}{2}}^2, \nabla \alpha_{\frac{p+q}{2}}^2\rangle
 =\Ec_{A,b}  \|\nabla \alpha_{\frac{p+q}{2}}^2\|^2=24.
\]
\end{enumerate}
\vspc
%

%%%%%%%%%%%%%%
\subsection{Proof of Proposition \ref{prop:Vprop}}

From \eqref{eq:Vijkdef} and that $\alpha_p(x)=|x-p|$ outside $B$, we have
\begin{align}
V^{i}_{j,k}(x)  =x\cdot(-e_{i}+e_{j}+e_{k}), \qquad  \text{on } \R^3 \setminus B.
\label{eq:Vij4}
\end{align}
Also, from \eqref{eq:Vijkdef} and \ref{p3.2itemi} of Proposition \ref{prop:alphaprop} we have
\begin{align}
\Ec_{A,b}  V^{i}_{j,k} &= 0,  \qquad \text{on } \R^3.
\label{eq:Vij1}
\end{align}
Also, from \eqref{eq:Vijkdef} we have
\[
\|\nabla  V^{i}_{j,k}\|^{2}  =\frac{1}{16}\left(\|\nabla\alpha_{2e_{i}}^2\|^2+\|\nabla\alpha_{2e_{j}+2e_{k}}^2\|^2-2 
\langle\nabla\alpha_{2e_{i}}^2,\nabla\alpha_{2e_{j}+2e_{k}}^2 \rangle\right),
\qquad \text{on } \R^3,
\]
so, using \ref{p3.2itemi} and \ref{p3.2itemii} from Proposition \ref{prop:alphaprop}, 
\begin{align}
\Ec_{A,b} \left(\|\nabla  V^{i}_{j,k}\|^{2}\right)& =0
\qquad \text{on } \R^3, \qquad \text{if } e_i+e_j+e_k \in \Pc.
\label{eq:Vij3}
\end{align}
From \eqref{eq:Vij4}, we see that
\[
 V^{1}_{2,3}(x)=x\cdot(-e_{1}+e_{2}+e_{3})=-  V^{2}_{1,2}(x)+  V^{3}_{2,3}(x)+  V^{1}_{3,1}(x),
 \qquad \text{outside } B,
\]
hence unique continuation and \eqref{eq:Vij1} imply that 
\[
 V^{1}_{2,3}(x) = -  V^{2}_{1,2}(x)+  V^{3}_{2,3}(x)+  V^{1}_{3,1}(x), \qquad \text{on } \R^3.
 \]
 We can obtain two other such relations in the similar manner, and we list all of them here.
\begin{subequations}
\begin{alignat*}{2}
  V^{1}_{2,3} & =-  V^{2}_{1,2}+  V^{3}_{2,3}+  V^{1}_{3,1},
  \qquad & & \text{on } \R^3
  \\
 V^{2}_{3,1} & = V^{2}_{1,2}-  V^{3}_{2,3}+  V^{1}_{3,1}, \qquad & & \text{on } \R^3
 \\
 V^{3}_{1,2} &=  V^{2}_{1,2}+  V^{3}_{2,3}-  V^{1}_{3,1}, \qquad & & \text{on } \R^3.
\end{alignat*}
\end{subequations}
Hence taking the gradient, the following relations holds on $\R^3$:
\begin{align*}
    \|\nabla   V^{1}_{2,3}\|^{2}=&\|\nabla   V^{2}_{1,2}\|^{2}+\|\nabla   V^{3}_{2,3}\|^{2}+\|\nabla    V^{1}_{3,1}\|^{2}\\&-2\langle\nabla   V^{2}_{1,2},\nabla   V^{3}_{2,3}\rangle+2\langle \nabla   V^{3}_{2,3},\nabla   V^{1}_{3,1}\rangle-2\langle\nabla   V^{2}_{1,2},\nabla   V^{1}_{3,1}\rangle;
    \\
      \|\nabla   V^{2}_{3,1}\|^{2}=&\|\nabla   V^{2}_{1,2}\|^{2}+\|\nabla   V^{3}_{2,3}\|^{2}+\|\nabla    V^{1}_{3,1}\|^{2}\\&-2\langle\nabla   V^{2}_{1,2},\nabla   V^{3}_{2,3}\rangle-2\langle \nabla   V^{3}_{2,3},\nabla   V^{1}_{3,1}\rangle+2\langle\nabla   V^{2}_{1,2},\nabla   V^{1}_{3,1}\rangle
      \\
       \|\nabla   V^{3}_{1,2}\|^{2}=&\|\nabla   V^{2}_{1,2}\|^{2}+\|\nabla   V^{3}_{2,3}\|^{2}+\|\nabla    V^{1}_{3,1}\|^{2}\\&+2\langle\nabla   V^{2}_{1,2},\nabla   V^{3}_{2,3}\rangle-2\langle \nabla   V^{3}_{2,3},\nabla   V^{1}_{3,1}\rangle-2\langle\nabla   V^{2}_{1,2},\nabla   V^{1}_{3,1}\rangle.
\end{align*}
Solving this system of equations, the following holds on $\R^3$:
\begin{align*}
    \langle\nabla   V^{2}_{1,2},\nabla   V^{3}_{2,3}\rangle&=\frac{1}{2}\left(\|\nabla   V^{2}_{1,2}\|^{2}+\|\nabla   V^{3}_{2,3}\|^{2}+\|\nabla    V^{1}_{3,1}\|^{2}\right)-\frac{1}{4}\left( \|\nabla   V^{1}_{2,3}\|^{2}+ \|\nabla   V^{2}_{3,1}\|^{2}\right)
    \\
    \langle \nabla   V^{3}_{2,3},\nabla   V^{1}_{3,1}\rangle&=\frac{1}{2}\left(\|\nabla   V^{2}_{1,2}\|^{2}+\|\nabla   V^{3}_{2,3}\|^{2}+\|\nabla    V^{1}_{3,1}\|^{2}\right)-\frac{1}{4}\left(  \|\nabla   V^{2}_{3,1}\|^{2}+ \|\nabla   V^{3}_{1,2}\|^{2}\right)
    \\
   \langle\nabla   V^{2}_{1,2},\nabla   V^{1}_{3,1}\rangle&=\frac{1}{2}\left(\|\nabla   V^{2}_{1,2}\|^{2}+\|\nabla   V^{3}_{2,3}\|^{2}+\|\nabla    V^{1}_{3,1}\|^{2}\right)-\frac{1}{4}\left(  \|\nabla   V^{3}_{1,2}\|^{2}+\|\nabla   V^{1}_{2,3}\|^{2}\right).
\end{align*}
Using \eqref{eq:Vij4}, one may verify that the LHS (hence the RHS) 
of the above three equations are zero outside $B$.
Further, using \eqref{eq:Vij3} (note the vectors in $\Pc$ make \eqref{eq:Vij3} applicable here), 
we see that applying $\Ec_{A,b}$ to the RHS of the above three equations gives us zero on $\R^3$. Hence unique continuation for solutions of second order elliptic PDEs implies
that the RHS of the above three equations are zero on $\R^3$. Therefore
\begin{equation}\label{orthogonality1}
    \langle\nabla   V^{2}_{1,2},\nabla   V^{3}_{2,3}\rangle= \langle \nabla   V^{3}_{2,3},\nabla   V^{1}_{3,1}\rangle= \langle\nabla   V^{2}_{1,2},\nabla   V^{1}_{3,1}\rangle=0,
    \qquad \text{on } \R^3.
\end{equation}

Similarly, using \eqref{eq:Vij4}, we have
\[
\| \nabla V^2_{1,2}\|^2 = 1 \qquad \text{outside } B
\]
and, using \eqref{eq:Vij3}, applying $\Ec_{A,b}$ to the above equation results in $0$ on both sides, on $\R^3$, 
hence unique continuation implies the above relation holds on $\R^3$. Repeating this argument, one obtains
\begin{align}\label{orthogonality2}
    \|\nabla   V^{2}_{1,2}\|^2=\|\nabla   V^{3}_{2,3}\|^2=\|\nabla   V^{1}_{3,1}\|^2=1,
    \qquad \text{on } \R^3.
\end{align}
Using \eqref{orthogonality1} and \eqref{orthogonality2} and repeating the argument on \cite[page 325]{ORS26a}, we
conclude that $\Psi$ is a diffeomorphism and $\Psi^*(Eucl) = g$. Further, \eqref{eq:Vij4} shows that
$\Psi=id$ outside $B$.

%%%%%%%%%%%%%%%%%%%%

\begin{appendices}

%%%%%%%%%%%%%
%%%%%%
\section{Proof of Proposition \ref{prop:EIC}}

The proof is similar to the proof of \cite[Proposition 2.3]{ORS26a}, where we have replaced 
geodesics with initial velocity parallel to a 
fixed unit vector $\omega$ by geodesics originating at a fixed point $p$. Our proof is a little simpler.

The geodesics $\gamma_{p,\omega}$ will be called $p$-geodesics. Since $p$ is fixed, instead of writing 
$\Phi_p$ or $\gamma_{p,\omega}$ we write $\Phi$ and $\gamma_\omega$. 
We prove $\Phi$ restricted to $(0,T) \times \Stwo$ is a diffeomorphism through a sequence of steps.
The following observation will be useful in the proof. Given $x \in \R^3$, by \cite[Corollary 6.15]{Lee2018} of the 
Hopf-Rinow theorem, there is a geodesic $\gamma$ from $p$ through $x$ such that
\[
d(p,x) = L(\gamma|_{[p,x]}),
\]
where $\gamma|_{[p,x]}$ is the part of $\gamma$ from $p$ to $x$.
If we reparametrize $\gamma$ so that it has unit speed, $\gamma$ will become a $p$-geodesic. Hence, for every
$x \in \R^3$, there is a $p$-geodesic $\gamma$ such that $d(p,x) = L(\gamma|_{[p,x]})$.

Given $x \in \Bpone$, if $\gamma:[0,1] \to \R^3$ is the straight line path from $p$ to $x$ then
\[
L(\gamma) = \int_0^1 \| \gammadot(s) \| \, ds \leq \sqrt{g_{max}} \int_0^1 |\gammadot(s)| \, ds 
= \sqrt{g_{max}} |x-p| \leq \sqrt{g_{max}} ( |p| + 1) < T.
\]
Hence $d(x,p) < T$ for all $x \in \Bpone$ so
\[
\Bpone \setminus \{p\} \subset D_T.
\]

Let $\Sigma := \{ x \in \R^3 : |x-p| = |p|+1 \}$ denote the $p$-centered sphere of radius $|p|+1$. If a $p$-geodesic reaches 
$\Sigma$, it must cross $\Sigma$ transversally, travel out of $\Bpone$, and never enter $\Bpone$ again, because
all $p$-geodesics originate at $p$ and the parts of the geodesic not in $\Bpone$ are line segments.

\noindent
\underline{We show that every $p$-geodesic crosses $\Sigma$ before time $T$.}

We have already seen that given a point on $\Sigma$, some $p$-geodesic reaches that point by time
$T_1 := \sqrt{g_{max}}(|p|+1) <T$. Further the EIC condition 
guarantees that, given a point on $\Sigma$, there is exactly one $p$-geodesic which reaches that point before time $T$.
Since $\Phi$ is continuous and no $p$-geodesic
can reach $\Sigma$ before time $|p|-1$, the set 
\[
K = \Phi^{-1} (\Sigma) \cap ( [|p|-1, T_1] \times \Stwo)
\]
is compact and $\Phi : K \to \Sigma$ is a continuous bijection. Since $K$ is compact, $\Phi$ is a diffeomorphism. Hence, using the inverse, we have continuous maps $\Omega : \Sigma \to \Stwo$ and $\tau : \Sigma \to \R$ such that
\[
\Phi( \Omega(x), \tau(x) ) = x, \qquad x \in \Sigma.
\]
Now $\Omega$ is injective, because if it was not then there would be two $p$-geodesics reaching the same point on
$\Sigma$ at (possibly different) times less than $T$, which would contradict the EIC. Let $M \subset \Stwo$ be 
the range of $\Omega$; then $\Omega : \Sigma \to M$ is a continuous bijection, hence a homeomorphism since
$\Sigma$ is compact. 
 \begin{figure}[h]
    \centering
\includegraphics[width=0.5\textwidth]{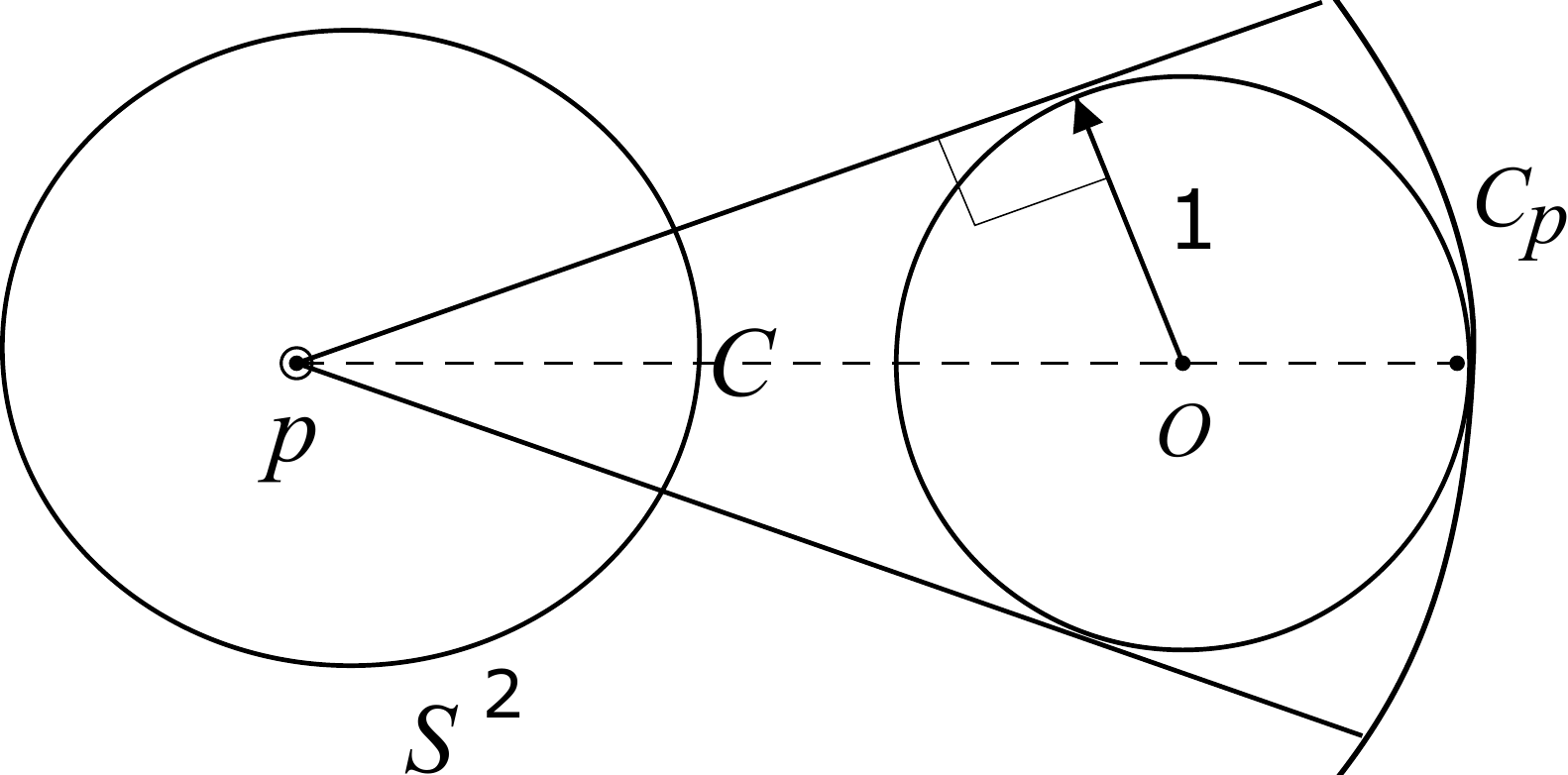}
    \caption{Spherical caps}
    \label{fig:caps}
\end{figure}
%\begin{figure}[h]
%\begin{center}
%\epsfig{file=caps, height=1.5in}
%\end{center}
%\caption{Spherical caps}
%\label{fig:caps}
%\end{figure}
%
Define the spherical caps 
\begin{align*}
C_p & := \{ x \in \Sigma : (x-p) \cdot (-p) \geq (|p|+1) \,\sqrt{|p|^2-1} \},
\\
C & := \{ w \in \Stwo : \omega \cdot (-p ) \geq \sqrt{|p|^2-1} \}.
\end{align*}

of $\Stwo$ and $\Sigma$. Then $\Omega$ maps $\Sigma \setminus C_p$ to $\Stwo \setminus C$ bijectively, hence
the restricted map $\Omega : C_p \to M \cap C$ is a homeomorphism. Therefore they have the same homology, so 
$H_1(M \cap C) = H_1(C_p)$. Now $C_p$ is contractible to a point, hence $H_1(C_p) = (0)$, so $H_1(M \cap C) = (0)$.
Also, one observes that $\pa C \subset M$, and if $M \cap C \neq C$ then, using one of the points in 
$C \setminus M$, one can construct a retract\footnote{A continuous map $r: M \cap C \to \pa C$ such that
$r(\omega) = \omega$ for all $\omega \in \pa C$.} $r : M \cap C \to \pa C$. Then, if $i : \pa C \to M$ is the inclusion map,
we have $r \circ i$ is the identity map on $\pa C$. 
Note $\pa C$ is homeomorphic to $\Sb^1$, so $H_1(\pa C) = \Zb \neq (0)$, 
therefore $r \circ i = id$ implies $H_1( M \cap C)   \neq (0)$ - a contradiction. Therefore $M \cap C = C$, implying every $p$-geodesic crosses $\Sigma$ before time $T$.

\noindent
\underline{Next we show that $\Phi$ is injective on $(0,T) \times \Stwo$.}

Through out we assume we are working with the part of the $p$-geodesics corresponding to the time interval $(0,T)$.
We have to show that no two $p$-geodesics intersect, arriving at that point at possibly different times.  
If two $p$-geodesics intersect, from the 
EIC we know that this point of intersection must be in $B$. We also know that once a $p$-geodesic leaves B, it never 
returns to $B$. So suppose $\gamma_1$ and $\gamma_2$ are two distinct $p$-geodesics, which intersect at some point
$q \in B$; after that they will cross $\Sigma$, say at points $q_1, q_2$ respectively. From the EIC we know that
$q_1, q_2$ are different and $\gamma_i$ is the shortest path from $p$ to $q_i$. WLOG we assume that
\[
L(\gamma_1|_{[p,q]}) \leq L(\gamma_2|_{[p,q]}).
\]
Consider the curve $\sigma$ from $p$ to $q_2$ consisting of $\gamma_1$ from $p$ to $q$ followed by
$\gamma_2$ from $q$ to $q_2$; then
\[
L(\sigma) \leq L(\gamma_2|_{[p,q_2]}).
\]
But $\gamma_2|_{[p,q_2]}$ is the shortest curve from $p$ to $q_2$, hence
\[
L(\sigma) = L(\gamma_2|_{[p,q_2]})
\]
and $\sigma$ is another shortest curve from $p$ to $q_2$. Hence, from \cite[Corollary 26, Chapter 10]{ONeill},
$\sigma$ is a geodesic so a $p$-geodesic. However there is only one $p$-geodesic from $p$ to $q_2$, namely $\gamma_2$, so 
\[
\gamma_1|_{[p,q]} = \gamma_2|_{[p,q]},
\]
hence $\gamma_1=\gamma_2$, a contradiction.

\noindent
\underline{Next we show $\Phi$ restricted to $(0,T) \times \Stwo$ is a diffeomorphism}.

To prove this, it remains to show that $\Phi$ is a local diffeomorphism.
We use the terminology from \cite[Chapter 10]{Lee2018}. If $\Phi'(t_0, \omega_0)$ is singular for some
$(t_0, \omega_0) \in (0,T) \times \Stwo$ then, from \cite[Proposition 10.11]{Lee2018}, $q=\Phi(t_0,\omega_{0})$ is
a conjugate to $p$ along $\gamma_{\omega_0}$. Hence, by \cite[Theorem 10.15]{Lee2018}, $\gamma_{\omega}$
is not the shortest path from $p$ to points on $\gamma_{\omega}$ past $q$. Since every $p$-geodesic crosses
$\Sigma$ and $t_0<T$, there is a $t_1 \in (t_0, T)$ such that $q_1 = \gamma_{\omega_0}(t_1)$ lies outside
$B$. Then $\gamma_{\omega_0}$ is a $p$-geodesic through $q_1$ but is not the shortest curve from $p$ to $q_1$. 
Hence there is another $p$-geodesic from $p$ to $q_1$ of length less than 
$T$. This violates the EIC since $q_1$ is outside $B$.
Hence $\Phi$ restricted to $(0,T) \times \Stwo$ is a local diffeomorphism, hence the injectivity of $\Phi$ implies it is a diffeomorphism.

It remains to prove the three bulleted claims. Since $\Phi$ restricted to $(0,T) \times \Stwo$ is a diffeomorphism, 
we have
a unique smooth map $x \to (r_x, \omega_x)$ from $D_T$ to $(0,T) \times \Stwo$ so that
$x = \gamma_{\omega_x}(r_x)$; further $\gamma_{\omega_x}$ is the shortest curve from $p$ to $x$. Hence
\[
\alpha(x) = d(p,x) = \int_0^{r_x} \| \gammadot_{\omega_x}(t) \| \, dt = r_x,
\]
and $\alpha$ is a smooth function on $D_T$. The second bulleted claim is a consequence of the third bulleted claim 
because $\gamma_{p,\omega}$ is a unit speed geodesic. The third bulleted claim is standard in the theory of the 
construction of solutions of nonlinear first order nonlinear PDEs (see \cite[Section 3.2, Theorem 2]{Evans-pde-book}), 
or one can imitate the proof of the same claim for Lorentzian manifolds in \cite[Lemma A.1]{ORS26b}.

%%%%%%%%%%%%%%
%%%%%%%
\section{Proof of Proposition \ref{prop:forward}}

%%%%%%

The principal symbol of $\mathcal{L}_{A,b}$ is the same as the principal symbol of $\Box_h$ for the 
Lorentzian metric (here $g=A^{-1}$)
\[
h(x,t) := -dt^2+g_{ij}(x)dx_{i}dx_{j}\quad\text{on } \R^3 \times \R,
\]
so the operator $\mathcal{L}_{A,b}$ is {\em normally hyperbolic} with $\pa_t$ as the global future directed vector 
field -  
see \cite[Section 1.5]{Bar}. Since $t= c$ is a smooth spacelike Cauchy surface for this Lorentzian metric, 
$\mathcal{L}_{A,b}$ is also {\em globally hyperbolic} - see \cite[Section 2]{ORS26b}. 
By \cite[Corollary 3.4.3]{Bar}, there 
exist advanced and retarded Green's operator $G_{+}, G_{-}$ associated with $\mathcal{L}_{A,b}$
- see\cite[Definition 3.4.1]{Bar}. Therefore \cite[Theorem 3.8]{Bar15} guarantees that 
$G_{+}, G_{-}$ have unique linear extensions 
$\overline{G_{+}}$, $\overline{G_{-}}$, respectively. Thus the hypothesis of \cite[Lemma 4.1]{Bar15} is satisfied. 

The geodesics associated with the Lorentzian metric $h$ may be parametrized by $t$ and every geodesic of $h$ is of the form $t \to (\gamma(t), t))$ for some geodesic $\gamma$ of $g$. Further, these geodesics are defined 
for all $t$ since there is a real number $g_{max}$ such that
\[
g(x)(v,v) \leq g_{max}|v|^2, \qquad \forall (x,v) \in T(\R^3).
\]

The support of the RHS of \eqref{eq:Upde} is
\[
M := \supp H(t) \, \delta(x-p) = \{ (p,t) : t \ge 0 \},
\]
 and $M$ is past-compact because of the finite speed of propagation of the Riemannian geodesics for $g$. 
 Hence \cite[Lemma 4.1, Corollary 4.2]{Bar15} implies that the IVP \eqref{eq:Upde}, \eqref{eq:Upic} has a unique distributional solution and
 \[
 \supp U_p \subset J_+(M).
\]
If $(q,s) \in J_+(M)$ then there is a $t_0 \in [0,s]$ and a time parametrized continuous piecewise smooth 
curve $\gamma : [t_0,s] \to \R^3$ such that
$\gamma(t_0)=p, \gamma(s)=q$ and $\|\gammadot(t)\| \leq 1$ for all $t \in [t_0,s]$. Hence
\[
L(\gamma) = \int_{t_0}^s \|\gammadot(t)\| \, dt \leq \int_{t_0}^s \, dt = s-t_0,
\]
so
\[
\alpha_p(q) = d(p,q) \leq L(\gamma) \leq s- t_0 \leq s,
\]
implying
\[
\supp U_p \subset \{(x,t) : t \geq \alpha_p(x) \}.
\]
 
 Noting that 
 \[
 \WF(U_p) \supset \WF ( \Lc_{A,b} U_p ) = \WF( H(t) \delta(x-p) ) = \Zc,
 \]
 and using \cite[Chapter VI, Theorem 2.1]{taylor1981}, we see that $\WF(U_p)$ is the union of $\Zc$
 with the forward in time null bicharacteristic flow out of $\Zc$. The only null vectors in $\Zc$ are
 $(p,0; \xi, \tau)$ with $|\xi|^2 = \tau^2$, so the forward in time null bicharacteristic flow out of $\Zc$
 consists of the $p$-null bicharacteristics. Hence
 \[
 \WF(U_p) = \Lambda_p^* \, \cup \, \Zc.
 \]

 For $t < |p|-1$, the distribution ${\frac{H(t-|x-p|)}{|x-p|} }$
 is supported in $ B_p(|p|-1) \times [0, |p|-1)$, and $\Lc_{A,b} = \Box$ on $B_p(|p|-1) \times \R$. We also know that
 \[
 \Box \left (  \frac{H(t-|x-p|)}{|x-p|}  \right ) = 4 \pi H(t) \, \delta(x-p), \qquad \text{on } \R^3 \times \R,
 \]
 hence, by a standard uniqueness argument for IVP for hyperbolic PDEs, we have
 \[
 U_p(x,t) =  \frac{H(t-|x-p|)}{|x-p|} , \qquad \text{on } \R^3 \times (-\infty, |p|-1).
 \]

Let \(\phi (x) \in C_c^\infty (\R^3) \) with $\phi=1$ in a neighborhood of $p$ and $\supp \chi \cap \Bbar = \emptyset$;
define
\[
V(x,t) :=\phi(x)\frac{H(t-|x-p|)}{|x-p|}, \qquad \text{on } \mathbb{R}^3\times \R.
\]

One may verify that $V \in L^2_{loc}(\R^3 \times \R)$ and
\begin{equation}
\Lc_{A,b} V(x,t)=4 \pi H(t)\delta(x-p)+h(x)\delta(t-|x-p|)+k(x)H(t-|x-p|) 
\end{equation}
for some $h, k \in C_c^\infty(\R^3)$ which are zero in a neighborhood of $p$. Hence
\[
W = V - U_p
\]
is the solution of the IVP
\begin{subequations}
\begin{align}
\Lc_{A,b} W =F, & \qquad \text{on } \R^3 \times \R
\label{eq:Wde}
\\
W=0, & \qquad \text{on } \R^3 \times (-\infty, 0),
\label{eq:Wic}
\end{align}
\end{subequations}
where
\[
F(x,t) := h(x)\delta(t-|x-p|)+k(x)H(t-|x-p|), \qquad \text{on } \R^3 \times \R.
\]
Now $F$ is supported in $t \geq 0$. Since $h(x), k(x)$ are compactly supported and zero in a neighborhood of $p$ and
$h(x) \delta(t-|x-p|) = \pa_t ( h(x) H(t-|x-p|))$, we see that $F \in H^{-1}_{loc} (\R^3 \times \R)$.
Hence, by \cite[Chapter VI, Proposition 1.10, Theorem 2.1]{taylor1981},
$W$ is one degree smoother than $F$ in the null directions and 
two degrees smoother than $F$ in the non-null directions in $T^*(\R^3 \times \R)$. 
Therefore $W \in L^2_{loc}(\R^3 \times \R)$, implying $U_p = V - W \in L^2_{loc}(\R^3 \times \R)$.
%%%%%%%%%%%%%%%%%%%%%%%%%%%%%%%%%%%%%%%%%
%%%%%%%%%%%%%%%
%%%%%%%%%%%%%%

%%%%%%%%%%%%%%
\section{Energy estimates}

%%%%%%%%%
%
 \begin{figure}[h]
    \centering
\includegraphics[width=0.33\textwidth]{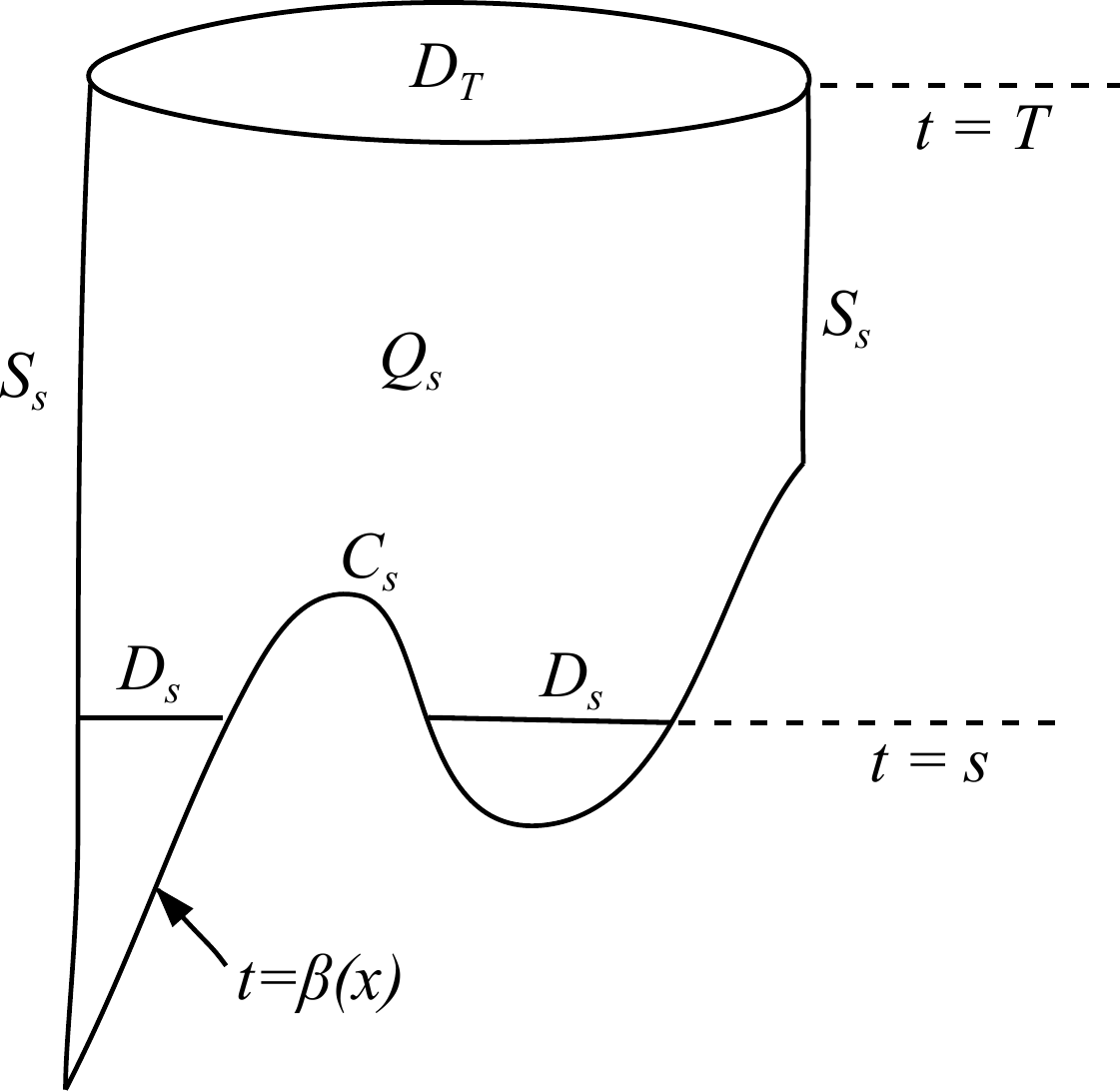}
   \caption{The regions and surfaces for Proposition \ref{prop:energy}}
\label{fig:energy}
\end{figure}
%\begin{figure}[ht]
%\begin{center}
%\epsfig{file=energy, height=2in}
%\end{center}
%\caption{The regions and surfaces for Proposition \ref{prop:energy}}
%\label{fig:energy}
%\end{figure}
%
Suppose $\beta(x)$ is a smooth function on $\Bbar$ and $T$ a real number with
\[
T > \beta_{\max} := \max_{x \in \Bbar} \beta(x).
\]
Define the region
\[
Q :=\{(x,t): x \in \Bbar, \beta(x)\leq t \leq T\} \subset \Bbar \times\mathbb{R}
\]
and, for $s \in \R$, the surfaces (see Figure \ref{fig:energy})
\begin{gather*}
Q_s:= Q \cap \{t \geq s \}, \qquad D_s:= Q \cap \{ t=s \}
\\
 S_s := \pa Q_s \cap (\pa B \times \R), \qquad C_s:= \pa Q_s \cap \{ (x, \beta(x)) : x \in \Bbar \}.
\end{gather*}
Note that 
\[
\pa Q_s = D_T \cup D_s \cup S_s \cup C_s
\]
with $D_T$ the top flat part, $D_s$ is the bottom flat part,  $S_s$ the lateral cylindrical part,
and $C_s$ the part lying on the characteristic surface $t=\beta(x)$;  $C_s$ may be empty for some $s$.

%%%%%%%%%%%%%%%%%%
\begin{proposition}\label{prop:energy}
Suppose $\Lc_{A,b}$ is an admissible operator, $\beta(x)$ a smooth function on $\Bbar$ such that 
$\|\nabla_g \beta\|^2 =1 $ on $\Bbar$ for $g=A^{-1}$, and $T > \beta_{max} := \max_{x \in \Bbar} \beta(x)$.
If $w(x,t)$ is a smooth function on $Q$ such that
\begin{subequations}
\begin{align}
\Lc_{A,b} w =0, & \qquad \text{on } Q,
\label{eq:wde}
\\
w(x, T) =0, ~ w_t (x, T)=0, & \qquad \text{for } x \in \Bbar,
\label{eq:wic}
\\
w = 0, & \qquad \text{on } S_{\beta_{min}}
\label{eq:wbc}
\end{align}
\end{subequations}
then $w=0$ on $Q$. Here $\beta_{min} = \min_{x \in \Bbar} \beta(x)$.
\end{proposition}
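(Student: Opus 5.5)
We plan to prove this by the standard energy identity for the hyperbolic operator $\Lc_{A,b}$ on the truncated regions $Q_s$, run backward in time from the top slice $t=T$, where the Cauchy data vanish. We then apply a Gronwall argument in $s$.

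Define for $s\in[\beta_{min},T]$ the energy
\[
E(s) := \int_{D_s} \left( w_t^2 + a_{ij}\,\pa_i w\, \pa_j w + w^2 \right) dx .
\]
Multiply $\Lc_{A,b} w = 0$ by $2 w_t$ and write
\[
2w_t \Lc_{A,b} w = \pa_t\!\left(w_t^2 + a_{ij}\pa_i w\pa_j w\right) - 2\pa_i\!\left(a_{ij} w_t \pa_j w\right) + R,
\]
where $R = 2(\pa_i a_{ij}) w_t\pa_j w - 2 b_i w_t \pa_i w$ is bounded by $C(w_t^2+|\nabla w|^2)$. We also add the trivial identity $\pa_t(w^2) = 2ww_t$ to control the zeroth order term. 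We integrate over $Q_s$ and apply the divergence theorem with outward normal $(\nu_x,\nu_t)$. The boundary pieces are handled as follows.
\begin{itemize}
\item On $D_T$ the flux vanishes by \eqref{eq:wic}.
\item On $S_s$ we have $w=0$ by \eqref{eq:wbc}, hence $w_t=0$ there. The lateral flux $-2a_{ij}w_t\pa_jw\,\nu_i$ vanishes, and the $\nu_t$ term is zero since $\nu_t=0$ on the cylinder.
\item On $D_s$ the flux gives $-E(s)$, with the bottom normal $(0,-1)$.
\item On $C_s$ the outward normal is proportional to $(\nabla\beta,-1)$, pointing toward decreasing $t-\beta$.
\end{itemize}

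The key step is to show that the $C_s$ flux has a favorable sign. The integrand there is proportional to
\[
-(w_t^2 + a_{ij}\pa_iw\pa_jw + w^2) + 2a_{ij}w_t\pa_jw\,\pa_i\beta ,
\]
up to positive factors and with signs arranged according to orientation. Using $\|\nabla_g\beta\|^2 = a_{ij}\pa_i\beta\pa_j\beta = 1$ and Cauchy--Schwarz in the $A$-inner product,
\[
|2 w_t\, a_{ij}\pa_j w\pa_i\beta| \le 2|w_t|\,(a_{ij}\pa_iw\pa_jw)^{1/2} \le w_t^2 + a_{ij}\pa_iw\pa_jw .
\]
So the characteristic boundary term is a nonpositive quantity, $-(\text{tangential derivative})^2 - w^2$, on the side we need. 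The integrated identity then yields
\[
E(s) \le C\int_s^T E(\sigma)\,d\sigma ,
\]
using the fact that the volume integral over $Q_s$ is $\int_s^T$ of integrals over the slices $D_\sigma$. I expect the main obstacle to be exactly this bookkeeping: checking that with the orientation fixed by integrating downward from $T$, the $C_s$ contribution is dropped with the correct sign. Since $\beta$ is characteristic (the condition $\|\nabla_g\beta\|=1$ says $t=\beta(x)$ is a null surface), the quadratic form is only semidefinite, not definite. This is still sufficient, because we only need to discard this term, not control anything with it.

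Gronwall's inequality, with $E(T)=0$, gives $E(s)=0$ for all $s\in[\beta_{min},T]$. Hence $w=0$ on each $D_s$, so $w=0$ on $Q$. If $D_s$ is a nonsmooth domain where $C_s$ meets $S_s$, we will justify the divergence theorem by approximation. For instance, we integrate over $Q_s\cap\{t\ge \beta+\ep\}$ and let $\ep\to0$, using the smoothness of $w$ on $Q$.
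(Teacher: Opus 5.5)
Your proposal is correct and follows essentially the same route as the paper. Both arguments multiply by $2w_t$ with $w^2$ added to the energy, and both use the vanishing fluxes on $D_T$ and $S_s$. Both then use a sign-definite flux on the characteristic piece $C_s$: the paper completes the square to get $\|w_t\nabla_g\beta+\nabla_g w\|^2+w^2\ge 0$, which is equivalent to your Cauchy--Schwarz bound with $a_{ij}\pa_i\beta\pa_j\beta=1$. Both finish with Gronwall applied to $E(s)\le C\int_s^T E$. One cosmetic slip: with your flux $-2a_{ij}w_t\pa_j w$, the cross term on $C_s$ carries a minus sign, which is harmless because you only use its absolute value.
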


%%%%%%%%%%
%%%%%%%
\begin{proof}
The proof is a standard application of energy estimates on bounded regions for hyperbolic PDEs, 
except that one part of the boundary is a characteristic surface. Estimates for such regions are also well studied, and
\cite[Proposition B.1]{ORS26a} is similar to our proposition except we have the more general operator $\Lc_{A,b}$.
This general operator requires the use of Gronwall's inequality. 

Using standard manipulations, we have the identity
\begin{align}
0 & = 2 (\Lc_{A,b} w ) \, w_t = \pa_t F + \nabla \cdot G + R
\label{eq:Lwwt}
\end{align}
with
\begin{gather*}
F  := w_t^2 + (\nabla w)^T A \nabla w + w^2, \qquad G  := - 2 w_t A \nabla w,
\\
R  := 2 (w_t \, b \cdot \nabla w -  w_t \, \pa_i (a_{ij}) \pa_j w +  w \,w_t).
\end{gather*}

Define the energy at time $t$ as
\[
E(t) := \int_{D_t} (w_t^2 + (\nabla w)^T A \nabla w + w^2)(x,t) \, dx.
\]
Integrating \eqref{eq:Lwwt} over $Q_s$ and using the divergence theorem we have
\begin{align}
E(T) - E(s) - & 2 \int_{S_s} w_t (x^T A \nabla w) \, dS 
 - \int_{C_s} ( F -  \nabla \beta \cdot G) ( 1 + |\nabla \beta|^2)^{-1/2} \, dS
\nonumber
\\
& + \int_{Q_s} R = 0.
\label{eq:energy1}
\end{align}
From the hypothesis, the integrals on $D_T$ and $S_s$ are zero (note $w_t=0$ on $S_s$); further using the positive definiteness of $A$, we have
\[
|R|   \cleq  w_t^2 + (\nabla w)^T A \nabla w + w^2, \qquad \text{on } Q.
\]
Hence \eqref{eq:energy1} implies
\begin{align}
E(s) + \int_{C_s} ( F -  \nabla \beta \cdot G) ( 1 + |\nabla \beta|^2)^{-1/2} \, dS 
\cleq \int_s^T E(t) \, dt,
\qquad s \leq T
\end{align}
If we can show that the integrand in the integral on $C_s$ is non-negative, then
\[
E(s) \cleq \int_s^T E(t) \, dt, \qquad s \leq T,
\]
which, by Gronwall's inequality, implies $E(s)=0$ for all $s \leq T$. Hence $w=0$ on $Q$.

It remains to show that $F -  \nabla \beta \cdot G \geq 0$ on $C_s$. Since $\|\nabla_g \beta \|^2=1$,
and $g=A^{-1}$ so $\nabla_g f = g^{-1} \nabla f = A \nabla f$, we have
\begin{align*}
F -  \nabla \beta \cdot G & = w_t^2 + (\nabla w)^T A \nabla w + w^2 + 2 w_t \, (\nabla \beta)^T A \nabla w
\\
& = \|\nabla_g \beta\|^2 w_t^2 + \|\nabla_g w\|^2 + 2 w_t \, g( \nabla_g \beta, \nabla_g w) + w^2
\\
& = \| w_t \nabla_g \beta + \nabla_g w\|^2 + w^2 \geq 0.
\end{align*}
\end{proof}
\end{appendices}

\bibliographystyle{unsrtnat}
\bibliography{references}

@misc{AK25,
      title={Inverse spectral problems with sparse data and applications to passive imaging on manifolds}, 
      author={Ali Feizmohammadi and Katya Krupchyk},
      year={2025},
      eprint={2507.22723},
      archivePrefix={arXiv},
      primaryClass={math.AP},
      url={https://arxiv.org/abs/2507.22723}, 
}

@misc{ORS26d,
      title={Semiglobal uniqueness for the {L}orentzian {C}alder\'on problem}, 
      author={Lauri Oksanen and Rakesh and Mikko Salo},
      year={2026},
      eprint={2608.13116},
      archivePrefix={arXiv},
      primaryClass={math.AP},
      url={https://arxiv.org/abs/2608.13116}, 
}

@misc{ORS26c,
      title={Fixed angle inverse scattering with non-constant velocity}, 
      author={Lauri Oksanen and Rakesh and Mikko Salo},
      year={2026},
      eprint={2608.13670},
      archivePrefix={arXiv},
      primaryClass={math.AP},
      url={https://arxiv.org/abs/2608.13670}, 
}

@article {ORS26b,
    AUTHOR = {Oksanen, Lauri and Rakesh and Salo, Mikko},
     TITLE = {Rigidity in the {L}orentzian {C}alder\'on problem with
              formally determined data},
   JOURNAL = {Commun. Am. Math. Soc.},
  FJOURNAL = {Communications of the American Mathematical Society},
    VOLUME = {6},
      YEAR = {2026},
     PAGES = {749--792},
      ISSN = {2692-3688},
   MRCLASS = {35R30},
  MRNUMBER = {5092743},
       DOI = {10.1090/cams/73},
       URL = {https://doi.org/10.1090/cams/73},
}

@article{ORS26a,
author = {Oksanen, Lauri and Rakesh and Salo, Mikko},
title = {Rigidity for Fixed Angle Inverse Scattering for {R}iemannian Metrics},
journal = {SIAM Journal on Mathematical Analysis},
volume = {58},
number = {1},
pages = {308-334},
year = {2026},
doi = {10.1137/24M1702623},
URL = {https://doi.org/10.1137/24M1702623},
eprint = {https://doi.org/10.1137/24M1702623
}
}

@article {KRS21,
    AUTHOR = {Krishnan, Venkateswaran P. and Rakesh, Rakesh and Senapati,
              Soumen},
     TITLE = {Stability for a formally determined inverse problem for a
              hyperbolic {PDE} with space and time dependent coefficients},
   JOURNAL = {SIAM J. Math. Anal.},
  FJOURNAL = {SIAM Journal on Mathematical Analysis},
    VOLUME = {53},
      YEAR = {2021},
    NUMBER = {6},
     PAGES = {6822--6846},
      ISSN = {0036-1410,1095-7154},
   MRCLASS = {35R30 (35L05 35L20)},
  MRNUMBER = {4347324},
       DOI = {10.1137/21M1400596},
       URL = {https://doi.org/10.1137/21M1400596},
}

@article {IY2001,
    AUTHOR = {Imanuvilov, Oleg Yu. and Yamamoto, Masahiro},
     TITLE = {Global uniqueness and stability in determining coefficients of
              wave equations},
   JOURNAL = {Comm. Partial Differential Equations},
  FJOURNAL = {Communications in Partial Differential Equations},
    VOLUME = {26},
      YEAR = {2001},
    NUMBER = {7-8},
     PAGES = {1409--1425},
      ISSN = {0360-5302,1532-4133},
   MRCLASS = {35R30 (35R25)},
  MRNUMBER = {1855284},
MRREVIEWER = {Peter\ G.\ Danilaev},
       DOI = {10.1081/PDE-100106139},
       URL = {https://doi.org/10.1081/PDE-100106139},
}

@book{Lee2018,
	address = {New York, NY},
	title = {Introduction to {Riemannian} manifolds},
	isbn = {978-3-319-91754-2},
	publisher = {Springer Berlin Heidelberg},
	author = {Lee, John},
	year = {2018},
}

@article {Bukhgeum81,
    AUTHOR = {Bukhgeim, A. L. and Klibanov, M. V.},
     TITLE = {Uniqueness in the large of a class of multidimensional inverse
              problems},
   JOURNAL = {Dokl. Akad. Nauk SSSR},
  FJOURNAL = {Doklady Akademii Nauk SSSR},
    VOLUME = {260},
      YEAR = {1981},
    NUMBER = {2},
     PAGES = {269--272},
      ISSN = {0002-3264},
   MRCLASS = {35R30 (35K10)},
  MRNUMBER = {630135},
MRREVIEWER = {R.\ G.\ Mukhometov},
}

@article {Salo20A,
    AUTHOR = {Rakesh and Salo, Mikko},
     TITLE = {The fixed angle scattering problem and wave equation inverse
              problems with two measurements},
   JOURNAL = {Inverse Problems},
  FJOURNAL = {Inverse Problems. An International Journal on the Theory and
              Practice of Inverse Problems, Inverse Methods and Computerized
              Inversion of Data},
    VOLUME = {36},
      YEAR = {2020},
    NUMBER = {3},
     PAGES = {035005, 42},
      ISSN = {0266-5611,1361-6420},
   MRCLASS = {35R30 (35L15 35P25 65J22)},
  MRNUMBER = {4068234},
MRREVIEWER = {Isaac\ Harris},
       DOI = {10.1088/1361-6420/ab23a2},
       URL = {https://doi.org/10.1088/1361-6420/ab23a2},
}

@article {Salo20,
    AUTHOR = {Rakesh and Salo, Mikko},
     TITLE = {Fixed angle inverse scattering for almost symmetric or
              controlled perturbations},
   JOURNAL = {SIAM J. Math. Anal.},
  FJOURNAL = {SIAM Journal on Mathematical Analysis},
    VOLUME = {52},
      YEAR = {2020},
    NUMBER = {6},
     PAGES = {5467--5499},
      ISSN = {0036-1410,1095-7154},
   MRCLASS = {35R30 (81U40)},
  MRNUMBER = {4170189},
MRREVIEWER = {Mansur\ I.\ Ismailov},
       DOI = {10.1137/20M1319309},
       URL = {https://doi.org/10.1137/20M1319309},
}

@article {Salo21,
    AUTHOR = {Mero\~no, Crist\'obal J. and Potenciano-Machado, Leyter and
              Salo, Mikko},
     TITLE = {The fixed angle scattering problem with a first-order
              perturbation},
   JOURNAL = {Ann. Henri Poincar\'e},
  FJOURNAL = {Annales Henri Poincar\'e. A Journal of Theoretical and
              Mathematical Physics},
    VOLUME = {22},
      YEAR = {2021},
    NUMBER = {11},
     PAGES = {3699--3746},
      ISSN = {1424-0637,1424-0661},
   MRCLASS = {35R30 (81U40)},
  MRNUMBER = {4325875},
       DOI = {10.1007/s00023-021-01081-w},
       URL = {https://doi.org/10.1007/s00023-021-01081-w},
}

@article {Salo22,
    AUTHOR = {Ma, Shiqi and Salo, Mikko},
     TITLE = {Fixed angle inverse scattering in the presence of a
              {R}iemannian metric},
   JOURNAL = {J. Inverse Ill-Posed Probl.},
  FJOURNAL = {Journal of Inverse and Ill-Posed Problems},
    VOLUME = {30},
      YEAR = {2022},
    NUMBER = {4},
     PAGES = {495--520},
      ISSN = {0928-0219,1569-3945},
   MRCLASS = {35Q60 (31B10 35J05 35R30 78A40)},
  MRNUMBER = {4460195},
       DOI = {10.1515/jiip-2020-0119},
       URL = {https://doi.org/10.1515/jiip-2020-0119},
}

@article {Salo23,
    AUTHOR = {Ma, Shiqi and Potenciano-Machado, Leyter and Salo, Mikko},
     TITLE = {Fixed angle inverse scattering for sound speeds close to
              constant},
   JOURNAL = {SIAM J. Math. Anal.},
  FJOURNAL = {SIAM Journal on Mathematical Analysis},
    VOLUME = {55},
      YEAR = {2023},
    NUMBER = {4},
     PAGES = {3420--3456},
      ISSN = {0036-1410,1095-7154},
   MRCLASS = {35R30 (31B10 35J05 35Q60 78A40)},
  MRNUMBER = {4624351},
       DOI = {10.1137/22M147640X},
       URL = {https://doi.org/10.1137/22M147640X},
}

@book{ONeill,
  title={Semi-Riemannian Geometry With Applications to Relativity},
  author={O'Neill, B.},
  isbn={9780080570570},
  series={Pure and Applied Mathematics},
  url={https://books.google.com/books?id=CGk1eRSjFIIC},
  year={1983},
  publisher={Academic Press}
}

@article {Bar15,
    AUTHOR = {B\"ar, Christian},
     TITLE = {Green-hyperbolic operators on globally hyperbolic spacetimes},
   JOURNAL = {Comm. Math. Phys.},
  FJOURNAL = {Communications in Mathematical Physics},
    VOLUME = {333},
      YEAR = {2015},
    NUMBER = {3},
     PAGES = {1585--1615},
      ISSN = {0010-3616,1432-0916},
   MRCLASS = {35L30 (35L15 35L45 35R01 58J45)},
  MRNUMBER = {3302643},
MRREVIEWER = {Herbert\ Koch},
       DOI = {10.1007/s00220-014-2097-7},
       URL = {https://doi.org/10.1007/s00220-014-2097-7},
}

@book {Bar,
    AUTHOR = {B\"ar, Christian and Ginoux, Nicolas and Pf\"affle, Frank},
     TITLE = {Wave equations on {L}orentzian manifolds and quantization},
    SERIES = {ESI Lectures in Mathematics and Physics},
 PUBLISHER = {European Mathematical Society (EMS), Z\"urich},
      YEAR = {2007},
     PAGES = {viii+194},
      ISBN = {978-3-03719-037-1},
   MRCLASS = {58J45 (35L05 35L15 46L60 81R15 81T05 81T20)},
  MRNUMBER = {2298021},
MRREVIEWER = {Rainer\ Verch},
       DOI = {10.4171/037},
       URL = {https://doi.org/10.4171/037},
}

@article {KRS23,
    AUTHOR = {Krishnan, Venkateswaran P. and Rakesh and Senapati, Soumen},
     TITLE = {Point sources and stability for an inverse problem for a
              hyperbolic {PDE} with space and time dependent coefficients},
   JOURNAL = {J. Differential Equations},
  FJOURNAL = {Journal of Differential Equations},
    VOLUME = {342},
      YEAR = {2023},
     PAGES = {622--665},
      ISSN = {0022-0396,1090-2732},
   MRCLASS = {35R30 (35K20 35L05 35L20)},
  MRNUMBER = {4502222},
       DOI = {10.1016/j.jde.2022.10.025},
       URL = {https://doi.org/10.1016/j.jde.2022.10.025},
}

@article {R,
    AUTHOR = {Rubin, Boris},
     TITLE = {Inversion formulae for the spherical mean in odd dimensions
              and the {E}uler-{P}oisson-{D}arboux equation},
   JOURNAL = {Inverse Problems},
  FJOURNAL = {Inverse Problems. An International Journal on the Theory and
              Practice of Inverse Problems, Inverse Methods and Computerized
              Inversion of Data},
    VOLUME = {24},
      YEAR = {2008},
    NUMBER = {2},
     PAGES = {025021, 10},
      ISSN = {0266-5611},
   MRCLASS = {44A12 (26A33 35Q15 35R30)},
  MRNUMBER = {2408558},
MRREVIEWER = {Dmitry G. Shepelsky},
       DOI = {10.1088/0266-5611/24/2/025021},
       URL = {https://doi.org/10.1088/0266-5611/24/2/025021},
}

@article {K,
    AUTHOR = {Kunyansky, Leonid A.},
     TITLE = {Explicit inversion formulae for the spherical mean {R}adon
              transform},
   JOURNAL = {Inverse Problems},
  FJOURNAL = {Inverse Problems. An International Journal on the Theory and
              Practice of Inverse Problems, Inverse Methods and Computerized
              Inversion of Data},
    VOLUME = {23},
      YEAR = {2007},
    NUMBER = {1},
     PAGES = {373--383},
      ISSN = {0266-5611},
   MRCLASS = {44A12 (65R10)},
  MRNUMBER = {2302980},
MRREVIEWER = {Fritz Keinert},
       DOI = {10.1088/0266-5611/23/1/021},
       URL = {https://doi.org/10.1088/0266-5611/23/1/021},
}

@book {H1,
    AUTHOR = {H{\"o}rmander, Lars},
     TITLE = {The analysis of linear partial differential operators. {I}},
    SERIES = {Classics in Mathematics},
      NOTE = {Distribution theory and Fourier analysis,
              Reprint of the second (1990) edition [Springer, Berlin;
              MR1065993 (91m:35001a)]},
 PUBLISHER = {Springer-Verlag},
   ADDRESS = {Berlin},
      YEAR = {2003},
     PAGES = {x+440},
      ISBN = {3-540-00662-1},
   MRCLASS = {35-02},
  MRNUMBER = {MR1996773},
}

@Book{taylor1981,
  author	= {Taylor, Michael E.},
  title		= {Pseudodifferential operators},
  fseries	= {Princeton Mathematical Series},
  series	= {Princeton Math. Ser.},
  volume	= {34},
  year		= {1981},
  publisher	= {Princeton University Press, Princeton, NJ},
  language	= {English},
  zbmath	= {3709065},
  zbl		= {0453.47026}
}

@book {Evans-pde-book,
    AUTHOR = {Evans, Lawrence C.},
     TITLE = {Partial differential equations},
    SERIES = {Graduate Studies in Mathematics},
    VOLUME = {19},
 PUBLISHER = {American Mathematical Society, Providence, RI},
      YEAR = {1998},
     PAGES = {xviii+662},
      ISBN = {0-8218-0772-2},
   MRCLASS = {35-01},
  MRNUMBER = {1625845},
MRREVIEWER = {Luigi Rodino},
}

 \end{document}